\def\R{\mathbb{R}}
\def\C{\mathbb{C}}
\def\N{\mathbb{N}}
\def\Z{\mathbb{Z}}
\def\area{\operatorname{\bf area}}
\def\Norm{\operatorname{\bf N}}
\def\argmin{\operatorname{argmin}}
\def\argmax{\operatorname{argmax}}
\def\dist{\operatorname{d}}
\def\Re{\operatorname{Re}}
\def\Im{\operatorname{Im}}
\def\SL{\operatorname{SL}}
\def\epsilon{\varepsilon}
\def\cC{\mathcal{C}}
\def\cE{\mathcal{E}}
\def\cH{\mathcal{H}}
\def\cL{\mathcal{L}}
\def\a{\operatorname{a}}
\def\sgn{\operatorname{sgn}}
\def\dist{\operatorname{\bf dist}}
\def\Hdim{\operatorname{Hdim}}
\newcommand\DZ[1]{\langle\!\langle#1\rangle\!\rangle}
\newtheorem{theorem}{Theorem}
\newtheorem{proposition}[theorem]{Proposition}
\newtheorem{lemma}[theorem]{Lemma}
\newtheorem{corollary}[theorem]{Corollary}
\newtheorem{definition}[theorem]{Definition}
\newtheorem{remark}[theorem]{Remark}
\begin{document}

\begin{frontmatter}[classification=text]


\author[mb]{Michael Boshernitzan}
\author[vd]{Vincent Delecroix}

\begin{abstract}
In this work, we use use a solution to a packing problem in the plane to study
recurrence of maps on the interval [0,1]. First of all, we prove that $1/\sqrt{5}$
is the optiaml recurrence rate of measurable applications of the interval. Secondly,
we analyze the bottom of the Lagrange spectrum of interval exchange transformations.
\end{abstract}
\end{frontmatter}

\maketitle

\section{Introduction}
\subsection{Recurrence rate and Lagrange constants}
We study recurrence of maps of the unit interval that preserve Lebesgue measure.
Let $T\colon [0,1)\to [0,1)$ be a measurable map of the unit interval into itself.
The recurrence rate of $T$ at $x\in [0,1)$ is defined as
\[
r(T,x) = \liminf_{n \to \infty}\ n\, |T^n x-x|\in[0,\infty].
\]

Denote by $R_\alpha: [0,1) \to [0,1)$ the rotation by the angle $\alpha\in\R$,
that is 
$R_\alpha(x) = x+\alpha \pmod 1$.
Since the group of such rotations acts transitively on $[0,1)$  by local isometries (apart from at $x=0$), we note that \mbox{$r(R_\alpha,x)=r(R_\alpha, \frac12)$}, for all $\alpha$ and $x \not= 0$.
Set  $r(R_\alpha):=r(R_\alpha, \frac12)$.
One verifies that, for every $\alpha\in\R$,
\[
r(R_{\alpha})=\liminf_{q\to+\infty\atop q\in\Z}\ q\,\DZ{q\alpha},
\]
where  $\DZ{t}=\min_{n\in\Z}\limits|t-n|$ stands 
for the distance of $t\in\R$ to the nearest integer.

We will provide two generalizations of the following well known result.
\begin{theorem}[Hurwitz \cite{Hurwitz}] \label{thm:Hurwitz_rotation}
For any real number $\alpha$ we have $r(R_\alpha) \leq \frac{1}{\sqrt{5}}$.
Moreover, $r(R_\alpha) = \frac{1}{\sqrt{5}}$ if and only if the continued 
fraction expansion of $\alpha$  is eventually constant and equal to $1$.
(In particular, $r(R_\beta)=\frac{1}{\sqrt{5}}$ for 
$\beta = \frac{\sqrt{5}-1}{2} = [0; 1, 1, 1, ...]$).
On the other hand, if $r(R_\alpha) \not= \frac{1}{\sqrt{5}}$ then $r(R_\alpha) \leq \frac{1}{2 \sqrt{2}}$.
\end{theorem}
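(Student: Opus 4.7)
The proof proceeds via continued fractions. For rational $\alpha$ both assertions are trivial, so take $\alpha$ irrational with simple continued fraction $[a_0; a_1, a_2, \ldots]$ and convergents $p_n/q_n$. Two classical facts are used without reproof: the local minima of $q \mapsto q\DZ{q\alpha}$ occur at the convergent denominators, so $r(R_\alpha) = \liminf_n q_n \DZ{q_n\alpha}$; and writing $|\alpha - p_n/q_n| = 1/(\lambda_n q_n^2)$, one has the expansion
\[
\lambda_n \;=\; [a_{n+1};\,a_{n+2},\ldots] \;+\; [0;\,a_n, a_{n-1},\ldots,a_1],
\]
so $q_n \DZ{q_n\alpha} = 1/\lambda_n$ and $r(R_\alpha) = 1/\limsup_n \lambda_n$. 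The whole problem thus reduces to analyzing $\limsup_n \lambda_n$ in terms of the partial quotients.

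\textbf{Upper bound $1/\sqrt{5}$.} I would show that among every three consecutive indices at least one has $\lambda_k \geq \sqrt{5}$. Put $t_n := q_{n-1}/q_n \in (0,1)$. Since consecutive convergents lie on opposite sides of $\alpha$, the identity $|\alpha - p_n/q_n| + |\alpha - p_{n-1}/q_{n-1}| = 1/(q_{n-1}q_n)$ translates to $t_n/\lambda_n + 1/(t_n \lambda_{n-1}) = 1$. If $\lambda_{n-1}, \lambda_n < \sqrt{5}$, this forces $t_n + 1/t_n < \sqrt{5}$, hence $t_n > 1/\phi$ with $\phi := (1+\sqrt{5})/2$. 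The same reasoning applied to the pair $(n, n+1)$ gives $t_{n+1} > 1/\phi$, after which the recurrence $1/t_{n+1} = a_{n+1} + t_n$ produces $a_{n+1} < \phi - 1/\phi = 1$, contradicting $a_{n+1} \geq 1$. The implication ``$a_n = 1$ eventually $\Rightarrow r(R_\alpha) = 1/\sqrt{5}$'' is then immediate from the formula, since $\lambda_n \to \phi + 1/\phi = \sqrt{5}$.

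\textbf{Second gap: $1/(2\sqrt{2})$.} For the converse and the $1/(2\sqrt{2})$ bound, assume $a_n \geq 2$ for infinitely many $n$; I need $\limsup_n \lambda_n \geq 2\sqrt{2}$. If $a_n \geq 3$ happens infinitely often this is immediate: $\lambda_{n-1} \geq a_n \geq 3 > 2\sqrt{2}$ at such $n$. Otherwise $a_n \in \{1,2\}$ eventually with $a_n = 2$ infinitely often, and I would extract a subsequence $(n_k)$ with $a_{n_k+1} = 2$ along which the shifted sequences $(a_{n_k+m})_m$ converge pointwise to a bi-infinite string $(b_m)_{m\in\Z}$ with $b_m \in \{1,2\}$ and $b_1 = 2$. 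Then
\[
\lim_k \lambda_{n_k} \;=\; 2 + [0;\,b_2, b_3, \ldots] + [0;\,b_0, b_{-1}, \ldots],
\]
and the task is to choose the extraction so that this limit is $\geq 2\sqrt{2}$.

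\textbf{Main obstacle.} The genuinely hard step is this last extremization over admissible $\{1,2\}$-strings, which is exactly the first-gap step of Markov's analysis of the Lagrange spectrum. The extremal string is the constant one, $b_m \equiv 2$, yielding the sharp value $2 + 2(\sqrt{2} - 1) = 2\sqrt{2}$, realized by $\alpha = \sqrt{2} - 1$. To conclude one must check, via a finite combinatorial case analysis on local $\{1,2\}$-words around an index with digit $2$, that every admissible non-constant pattern admits a shift whose corresponding $\lambda$-value is $\geq 2\sqrt{2}$. This combinatorial extremization is the only truly non-routine ingredient of the argument.
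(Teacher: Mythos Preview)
The paper does not prove Theorem~\ref{thm:Hurwitz_rotation}; it is quoted as Hurwitz's classical result and used as motivation. The paper's own machinery does, when specialized to $d=1$, yield a geometric proof: the packing inequality of Theorem~\ref{thm:delta_estimate_in_convex} (via Corollary~\ref{cor:area_triangle} and the translation-surface argument of Section~\ref{sec:lagrange_iet}) shows that the minimum of the Lagrange spectrum $\cL_1$ is $\sqrt{5}$, attained only by golden tori, and that there is a gap above it. So the contrast is between your classical continued-fraction approach and the paper's geometric-packing route through admissible triangles and Delaunay triangulations of tori.

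Your sketch of the $1/\sqrt{5}$ bound is the standard Hurwitz--Borel argument and is correct as written: the identity $t_n/\lambda_n + 1/(t_n\lambda_{n-1})=1$ together with the recursion $1/t_{n+1}=a_{n+1}+t_n$ forces a contradiction if three consecutive $\lambda$'s are all below $\sqrt{5}$, and the equality case follows immediately from $\lambda_n\to\sqrt{5}$ when the tail is all $1$'s. For the $1/(2\sqrt{2})$ gap you correctly reduce to bi-infinite $\{1,2\}$-words with at least one $2$, and correctly identify the extremal word as the constant $2$; what remains, as you say, is the Markov-type case analysis showing that any non-constant $\{1,2\}$-word admits a shift with $\lambda\ge 2\sqrt{2}$. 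You do not carry this out, so the proposal is a correct outline rather than a complete proof of that clause. The paper's approach buys you a uniform geometric framework that generalizes to all interval exchanges; your approach is more elementary and self-contained for the rotation case, but the Markov case analysis you defer is precisely the step the paper's packing/rigidity argument handles in one stroke via the area lower bound for admissible triangles.
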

In other words, the inequality $r(R_{\alpha},x)\leq\frac1{\sqrt5}$  holds
 for all rotations $R_\alpha\colon [0,1) \to [0,1)$  and $x\neq0$.
Note that Hurwitz \cite{Hurwitz} also proved that the inequality $q\, \DZ{q\alpha}<\frac1{\sqrt5}$
has infinitely many solutions in integers $q\geq1$.
 
\emph{The Lagrange constant of} $\alpha\in\R$ is defined as follows:
\begin{equation} \label{eq:lagrange_rot_dim1}
L(\alpha) = \frac1{r(R_{\alpha})}=\limsup_{q \to \infty}\ \frac{1}{q\,\DZ{q\alpha}}
\in\big[\sqrt5,\infty\big], 
\end{equation}
and \emph{the (classical) Lagrange spectrum} is defined as the set of possible
finite Lagrange constants
\[
\cL_1 := \big\{L(\alpha):\, \alpha \in [0,1) \big\}\setminus\{+\infty\}.
\]
An equivalent way of stating Hurwitz's Theorem (Theorem~\ref{thm:Hurwitz_rotation}) is
\begin{equation}\label{eq:LagSp1}
\cL_1 \cap \big[0, 2\sqrt2\big) = \big\{\sqrt5\,\big\}.
\end{equation}

\subsection{Two generalizations of Hurwitz's Theorem}
The first proposed generalization is that the inequality $r(T,x) \leq
\frac{1}{\sqrt{5}}$ in Hurwitz's Theorem actually holds almost surely for all 
Lebesgue measure preserving transformations  $T\colon [0,1) \rightarrow [0,1)$
(and not just for the rotations $T=R_{\alpha}$).
\begin{theorem}\label{thm:rec01}
Let $T\colon [0,1) \rightarrow [0,1)$ be a measurable map of the unit interval
which preserves the Lebesgue measure~$\mu$.
Then, $r(T,x) \leq \frac1{\sqrt5}$, for $\mu$-almost every $x \in [0,1)$.
\end{theorem}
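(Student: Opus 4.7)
The strategy is a proof by contradiction combining Egorov's theorem, the Lebesgue density theorem, the Kac first-return map, and the paper's central packing lemma.

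Suppose for contradiction that the set $A_c := \{x : r(T,x) > c\}$ has positive Lebesgue measure for some $c > 1/\sqrt 5$. Since $r(T,x) = \lim_{N\to\infty} \inf_{n \ge N} n|T^n x - x|$ is a pointwise limit of measurable functions, Egorov's theorem provides a subset $A' \subseteq A_c$ of positive measure and a threshold $N_0$ on which the inequality $n|T^n x - x| > c$ holds uniformly for every $x \in A'$ and every $n \ge N_0$.

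The crucial localization step is to pass to a Lebesgue density point. Pick $x_0 \in A'$ of density one and set $A_\eta := A' \cap [x_0 - \eta, x_0 + \eta]$ for small $\eta > 0$; then $A_\eta \subseteq [x_0 - \eta, x_0 + \eta]$ and $\mu(A_\eta) = 2\eta(1 + o_\eta(1))$. The first-return map $T_{A_\eta}: A_\eta \to A_\eta$ preserves $\mu|_{A_\eta}$; by Kac's lemma, its return times $N_k$ satisfy $N_k/k \to 1/\mu(A_\eta)$ almost everywhere. For $x \in A_\eta$ (so $x \in A'$) and $k$ large, the Egorov bound at $x$ reads $N_k|T^{N_k} x - x| > c$, i.e.\ $k|T_{A_\eta}^k x - x| > k c / N_k \to c\mu(A_\eta)$. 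Since every induced-orbit point remains in $A_\eta \subseteq A'$, applying this bound at $y = T_{A_\eta}^i x$ yields the pairwise estimate $(j-i) \cdot |T_{A_\eta}^j x - T_{A_\eta}^i x| > c\mu(A_\eta)(1 - o(1))$ for all sufficiently separated $i < j$.

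The finite induced orbit $\{(k, T_{A_\eta}^k x) : 0 \le k \le M\}$ is thus a configuration of $M+1$ points in the rectangle $[0, M] \times [x_0 - \eta, x_0 + \eta]$ with pairwise hyperbolic separation at least $c\mu(A_\eta)(1 - o(1))$. Invoking the packing lemma of the paper — which I take as a black box and which should bound the number of such points in any strip $[0, L] \times [0, H]$ with separation $s$ by $LH/(\sqrt 5\, s) + O(1)$, with equality at $s = 1/\sqrt 5$ saturated by the golden-ratio rotation — we get
$$M + 1 \le \frac{2M\eta}{\sqrt 5 \cdot c\mu(A_\eta)(1 - o(1))} + O(1).$$
Dividing by $M$ and letting $M \to \infty$ gives $1 \le 2\eta / (\sqrt 5 c\mu(A_\eta)(1 - o(1)))$; then letting $\eta \to 0$ and using the Lebesgue density identity $\mu(A_\eta)/(2\eta) \to 1$ reduces this to $\sqrt 5 c \le 1$, contradicting $c > 1/\sqrt 5$.

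The main obstacle is the packing lemma itself. A naive area-based bound cannot yield the sharp constant $1/\sqrt 5$, because the hyperbolic forbidden region around each point has unbounded area in the ambient strip; the lemma must rather be proved by a genuine combinatorial analysis of near-extremal configurations, presumably paralleling the three-convergents structure at the heart of Hurwitz's original proof. A secondary, comparatively minor, technical point is uniformizing the pairwise estimate along the induced orbit — this is handled automatically by the inclusion $A_\eta \subseteq A'$, which ensures the Egorov parameters $c, N_0$ transfer to every orbit point.
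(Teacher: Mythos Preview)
Your overall architecture---induce on a small set near a density point, encode the orbit as a planar point configuration, apply the packing lemma, and let the interval shrink---is the same as the paper's. However, there is a genuine gap in your separation estimate, caused by your choice of time coordinate.

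You parametrize the orbit by the \emph{induced} time $k$, taking points $(k,\,T_{A_\eta}^k x)$. The hyperbolic separation between the $i$th and $j$th point is then $(j-i)\,|T_{A_\eta}^{j}x - T_{A_\eta}^{i}x|$. With $y = T_{A_\eta}^i x$ and $N_m(y)$ the $m$th return time of $y$ to $A_\eta$, Egorov gives only $N_{j-i}(y)\,|T_{A_\eta}^{j-i}y - y| > c$, so your separation is bounded below merely by $c\,(j-i)/N_{j-i}(y)$. This ratio is \emph{not} uniformly close to $\mu(A_\eta)$: for $j-i=1$ it equals $1/N_1(y)$, and the first return time $N_1(y)$ is unbounded on $A_\eta$ (its mean is $\geq 1/\mu(A_\eta)$ by Kac, so it takes arbitrarily large values on sets of positive measure). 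Thus many nearby pairs can have separation far below $c\mu(A_\eta)$, and the packing lemma does not apply to the full configuration. Your caveat ``for all sufficiently separated $i<j$'' acknowledges half of the problem but does not repair it: the packing bound needs \emph{every} pair separated, and even for large $j-i$ the threshold depends on $y=T_{A_\eta}^i x$ through the (pointwise, non-uniform) Birkhoff convergence rate at $y$. A second Egorov pass does not rescue this, since shrinking the set changes the induced map and its return times.

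The paper's remedy is to use the \emph{actual} time as the first coordinate: set $z_n = \big(F_n(x),\, S^n x\big)$ where $S$ is the first-return map on an interval $V$ and $F_n$ the $n$th return time. Then $\Norm^2(z_j - z_i) = F_{j-i}(y)\,|T^{F_{j-i}(y)}y - y|$ directly, with no ratio to control. The paper does not use Egorov; it works with the set $V_r = \{x\in V:\ n|T^n x - x|\geq r\ \text{for every }n\geq1\text{ with }T^n x\in V\}$, so that whenever $S^n x\in V_r$ the point $z_n$ is automatically $r$-separated from every later $z_m$. One then bounds only the count of \emph{those} indices (not all of them) by the packing lemma in the rectangle $[0,F_q(x)]\times V$, divides by $q$, and---crucially---integrates over $x\in V$, using $\int_V G\leq 1$ for the Birkhoff limit $G(x)=\lim F_q(x)/q$. (This integration is also what handles the non-ergodic case; your assertion ``$N_k/k\to 1/\mu(A_\eta)$ almost everywhere'' is valid only when $T$ is ergodic.) The outcome is $\mu(V_r)\leq \mu(V)/(r\sqrt5)$ for every subinterval $V$, after which a density argument along a grid of shrinking intervals finishes exactly as you sketch.
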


The above theorem provides the optimal constant in the quantitative recurrence
result in~\cite[Theorem 2.1]{Boshernitzan-recurrence} where, under the
conditions of Theorem \ref{thm:rec01}, a weaker inequality was
established (see Theorem \ref{thm:rec2} below).

Note that quantitative Poincar\'e recurrence results are possible in the more 
general settings of transformations of arbitrary metric spaces having finite 
Hausdorff dimension: see \cite{Boshernitzan-recurrence} 
and the discussion following Theorem~\ref{thm:rec2} below.

\bigskip

Our second generalization of Theorem~\ref{thm:Hurwitz_rotation} 
is related to interval exchange transformations.
An \emph{interval exchange transformation} (or \emph{i.e.t.} for short) is a bijection 
from an interval to itself that is a piecewise translation on finitely many intervals. 
More precisely,
given a permutation $\pi \in S_k$ and a vector $\lambda \in \R_+^k$,
we define a $k$-i.e.t. $T_{\pi,\lambda}$ as
\[
T_{\pi,\lambda}(x) = x - \sum_{j < i} \lambda_j + \sum_{\pi(j) < \pi(i)} \lambda_j,
\quad \text{ if \ $x \in [\lambda_1 + \ldots + \lambda_{i-1}, \lambda_1 + \ldots + \lambda_i)$}.
\]
Note that rotations are exactly the 2-i.e.t.s with permutations $\pi=(2,1)$.

The \emph{singularities} of $T$ are the points $x_i = \sum_{j \leq i}
\lambda_j$ for $i=1,\dots,k-1$. An i.e.t. $T_{\pi,\lambda}$ is said to be \emph{without connection} if there is
no pair of singularities $x$ and $y$ of $T$ such that $T^n x = y$ for some $n \geq 1$. It
was shown by Keane~\cite{Keane1975} that this condition implies the minimality
of the transformation $T$. 

Given an i.e.t. $T = T_{\pi,\lambda}$ that satisfies the Keane condition, its
$n$-th iterate $T^n$ is also an i.e.t. but on $(k-1)n+1$ intervals. Let
$\epsilon_n(T)$ be the smallest length of any of the intervals of $T^n$. In particular
$\epsilon_0(T) = 1$ and $\epsilon_1(T) = \min \lambda_i$. Note that the
number $\epsilon_n(T)$ can alternatively be defined as the minimum distance
between the $n$-th first preimages of the singularities together with $0$ and $1$.

For $T=T_{\pi,\lambda}$ we define
\[
\cE(T) := \liminf_{n \to \infty}\, \frac{n \epsilon_n(T)}{|\lambda|}
\quad \text{where } \ |\lambda| = \lambda_1 + \ldots + \lambda_k.
\]
The value $L(T) := \cE(T)^{-1}$ is called the \emph{Lagrange constant} of $T$.
It generalizes the Lagrange constant for the rotations $R_\alpha$. We also
recall that if $\pi \in S_k$ is \emph{irreducible} (or \emph{indecomposable})
then for Lebesgue-almost every $\lambda$ the Lagrange constant of
$T_{\pi,\lambda}$ is infinite.

The Lagrange spectrum of i.e.t.s was introduced by S.~Ferenczi
in~\cite{Ferenczi-lagrange} under the name \emph{lower Boshernitzan-Lagrange spectrum}. It
was then further studied  by P. Hubert, L. Marchese and C. Ulcigrai in~\cite{HubertMarcheseUlcigrai}.
The \emph{Lagrange spectrum} of the $k$-i.e.t.s is the following set of values:
\[
\cL_{k-1} = \{L(T)\colon\ \text{$T$ is a $k$-i.e.t. satisfying the Keane condition and $L(T) < \infty$}\}.
\]
Because $T^n$ is made of $(k-1)n+1$ intervals, $\inf\cL_{k-1} \geq k-1$.
In~\cite{HubertMarcheseUlcigrai}, the better bound 
$\inf\cL_k \geq \frac\pi2 k$ is established. 
We prove the following.
\begin{theorem} \label{thm:lagrange}
There exists a constant $\epsilon_0 > 0$ such that for any $d \geq 1$
\[
\cL_d \cap \left[0,\ d \sqrt{5} + \frac{\epsilon_0}{d}\right] = \left\{d\sqrt{5}\right\}.
\]
Moreover, for any permutation $\pi \in S_{d+1}$ such that $\pi(1) = d+1$, the length
vector $\lambda =  (\frac{\sqrt{5}+1}{2},1,1,\ldots,1)$ is such that
$T_{\pi,\lambda}$ satisfies the Keane condition and $L(T_{\pi,\lambda}) = d\sqrt{5}$.
\end{theorem}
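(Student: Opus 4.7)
The theorem has two parts: an explicit construction achieving $L(T) = d\sqrt{5}$ and the spectral gap around $d\sqrt{5}$. I would treat them in this order, since the construction clarifies the extremal behavior.

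\textbf{The extremal example.} Set $\phi = \frac{1+\sqrt{5}}{2}$, so $|\lambda| = d+\phi$. My first step is to verify that for the specific permutation $\pi_R$ with $\pi_R(1) = d+1$ and $\pi_R(i) = i-1$ for $i \geq 2$, the map $T_{\pi_R,\lambda}$ coincides with the pure rotation $R\colon x \mapsto x+d \pmod{d+\phi}$ on the circle of circumference $d+\phi$. Viewing $R$ as a $(d+1)$-i.e.t.\ with singularities $\phi, \phi+1, \ldots, \phi+d-1$, the singularities of $R^n$ are the $dn$ integer-shifted copies $\phi+m \pmod{d+\phi}$ for $m$ ranging over $dn$ consecutive integers, so $\epsilon_n(R) = \min_{1 \leq k \leq dn-1}\|k\|_{d+\phi}$, where $\|\cdot\|_{d+\phi}$ denotes distance to the nearest multiple of $d+\phi$. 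Setting $N = dn$, one has $n\epsilon_n(R) = (N/d)\min_{k \leq N}\|k\|_{d+\phi}$. Since the continued fraction $1/(d+\phi) = [0; d+1, \overline{1}]$ has the golden tail $\overline{1}$, Hurwitz's theorem (Theorem~\ref{thm:Hurwitz_rotation}) yields $\liminf_N N \min_{k \leq N}\|k\|_{d+\phi} = (d+\phi)/\sqrt{5}$, hence $L(R) = d\sqrt{5}$. The Keane condition is immediate from the irrationality of $\phi$. For arbitrary $\pi$ with $\pi(1) = d+1$, the map $T_{\pi,\lambda}$ agrees with $R$ up to integer shifts permuting the unit subintervals of $[\phi, d+\phi)$; although individual singularity orbits depend on $\pi$, for all sufficiently large $n$ the full union (the singularity set of $T^n_{\pi,\lambda}$) coincides with the singularity set of $R^n$, since the $\pi$-dependent integer shifts eventually permute among themselves and reconstitute the same union. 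Therefore the liminfs agree and $L(T_{\pi,\lambda}) = d\sqrt{5}$.

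\textbf{The spectral gap.} For a general $(d+1)$-i.e.t.\ $T$ with $L(T) < d\sqrt{5} + \epsilon_0/d$, the plan is to combine Theorem~\ref{thm:rec01} with a quantitative rigidity version of the underlying planar packing lemma. The coarse bound $L(T) \geq d\sqrt{5}$ follows by applying Theorem~\ref{thm:rec01} to (a Rauzy-Veech induced version of) $T$, using the $dn+1$ interval structure of $T^n$ to translate the a.e.\ recurrence bound $\leq 1/\sqrt{5}$ into a factor $d$ on the Lagrange scale. To rule out values strictly between $d\sqrt{5}$ and $d\sqrt{5} + \epsilon_0/d$, I would apply the rigidity version of the planar packing lemma: any configuration of $dn$ points in a bounded region that achieves near-extremal minimum-gap density must lie close to the extremal configuration associated to $R$, with a quantitative defect of order $\Omega(\epsilon_0/d^2)$ in the Lagrange scale for every non-extremal combinatorial type. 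Combined with the finiteness of i.e.t.\ permutations in $S_{d+1}$, approximate rigidity forces $T$ to produce asymptotically the same singularity set as $T_{\pi_R,\lambda}$, whence $L(T) = d\sqrt{5}$.

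\textbf{Main obstacle.} The hardest step is making the packing-defect estimate effective enough in $d$ to produce the explicit $\epsilon_0/d$ gap. The planar packing lemma gives a rigidity bound for near-extremal point configurations, but translating this into a lower bound on $L(T)$ for every non-extremal $(d+1)$-i.e.t.\ requires careful tracking of how the permutation and the length vector $\lambda$ affect the singularity distribution at every scale of Rauzy-Veech renormalization. I would expect the case analysis to classify the obstructions to extremal packing according to which renormalized combinatorial type arises under induction, and to exhibit a uniform positive defect in each non-extremal case.
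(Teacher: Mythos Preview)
Your proposal has a genuine gap in the spectral-gap part, and the overall route differs substantially from the paper's.

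\textbf{The gap.} You propose to obtain the coarse bound $L(T)\geq d\sqrt5$ by invoking Theorem~\ref{thm:rec01}. But Theorem~\ref{thm:rec01} controls $\liminf n|T^nx-x|$ for \emph{almost every} point $x$, whereas $\epsilon_n(T)$ is the minimum gap in the finite set of preimages of the singularities of $T$---a measure-zero set to which the almost-everywhere statement says nothing. There is no mechanism in your outline that transfers the a.e.\ recurrence bound to a bound on singularity spacing; the ``$dn+1$ interval structure'' observation does not bridge this, since the recurrence time realizing $r(T,x)\leq 1/\sqrt5$ at a generic $x$ need not be comparable to the $n$ at which you are measuring $\epsilon_n$. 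Your rigidity step is then built on this unproved foundation, and is itself only a heuristic (as you acknowledge): invoking ``planar packing rigidity'' and Rauzy--Veech renormalization without a concrete coupling between the two does not yield the uniform $\epsilon_0/d$ defect.

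\textbf{What the paper does instead.} The paper never uses Theorem~\ref{thm:rec01} for Theorem~\ref{thm:lagrange}. It passes to translation surfaces: every $(d+1)$-i.e.t.\ arises as the first return of the vertical flow on a surface $X\in\cC(d)$ built from $2d$ triangles, and Theorem~\ref{thm:a} identifies $\cE(T)$ with $\a(X)=\liminf \Norm^2(v)/\area(X)$ over saddle connections $v$. The coarse bound then comes directly from Corollary~\ref{cor:area_triangle}: each of the $2d$ triangles in the $L^\infty$-Delaunay triangulation of $X$ has area at least $\frac{\sqrt5}{2}\min\Norm^2$, so summing gives $\area(X)\geq d\sqrt5\cdot\min\Norm^2$, i.e.\ $\a(X)\leq 1/(d\sqrt5)$. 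The rigidity is obtained not through Rauzy--Veech but through the quantitative triangle estimates (Corollary~\ref{cor:distorsion_from_area}) which force all best-approximation holonomies to have $\Norm^2$ close to $1/\sqrt5$ (Lemma~\ref{lem:low_to_up_bound}), then a local rigidity lemma (Lemma~\ref{lem:parallelogram}) pinning down admissible quadrilaterals, and finally a Perron--Frobenius fixed-point argument on the combinatorics of diagonal changes (Ferenczi--Zamboni moves) to conclude that $X$ is a golden surface. The explicit examples in the ``Moreover'' clause are exactly the i.e.t.'s induced on golden surfaces, so this characterization subsumes your direct computation for general $\pi$ with $\pi(1)=d+1$ (which, incidentally, is correct in spirit for the rotation $\pi_R$ but whose extension to arbitrary $\pi$ via ``eventual coincidence of singularity sets'' would itself need justification).
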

The proof of Theorem \ref{thm:lagrange} will be given in
Section~\ref{sec:lagrange_iet}. We will actually completely characterize the
$(d+1)$-i.e.t.s $T$ such that $L(T) = d \sqrt{5}$. Note that the case $d=1$ is given
by Hurwitz's theorem (see in particular~\eqref{eq:LagSp1}) and that the case $d=2$ was proven
in~\cite[Theorem~4.10]{Ferenczi-lagrange}.

\bigskip

The common tool in the proofs of Theorems~\ref{thm:rec01} and~\ref{thm:lagrange}  
is Theorem~\ref{thm:delta_estimate_in_convex} concerning an unconventional packing problem 
in $\R^{2}$ (see Section \ref{sec:pack} for precise setting), which determines the relevant
optimal constant $\frac1{\sqrt 5}$. Section~\ref{sec:interval_recurrence} is
dedicated to the proof of Theorem~\ref{thm:rec01}.  And in
Section~\ref{sec:lagrange_iet} we provide the proof for
Theorem~\ref{thm:lagrange}.

\subsection{Further Comments}

\subsubsection*{The Classical Lagrange spectrum} 
There are many results about the Lagrange spectrum $\cL_1$ (of rotations), including the following.
\begin{enumerate}
\item $\cL_1$ starts with a discrete sequence $\sqrt{5}$, $2\sqrt{2}$, $\sqrt{221}/5$, \ldots with an accumulation point at $3$ \cite{Markov},
\item $\cL_1$ contains the half line $[c,+\infty)$ where $c = \frac{2221564096 + 283748 \sqrt{462}}{491993569} \simeq 4.528$, and
this half line is maximal (i.e. $\cL_1$ does not contains a half-line $[c',+\infty)$ with $c' < c$) \cite{Hall47}, \cite{Freiman75},
\item $\Hdim(\cL_1 \cap [0,t]) = 0$ if and only if $t \leq 3$ and $\Hdim(\cL_1 \cap [0, 2\sqrt{3}]) = 1$ \cite{Moreira}.
\end{enumerate}
The interval exchange Lagrange spectrum $\cL_d$ contains $m \cL_{d'}$ if $d = m d'$. In particular
$\cL_d$ contains $d \cL_1$, from which some properties follow (such as the existence of a half line).
But nothing as precise as the three above items for $\cL_1$ is known in general for $\cL_d$.

\subsubsection*{Lebesgue-preserving maps on subsets in $\R^{n}$ of finite volume}
Recall that quantitative Poincar\'e recurrence (almost everywhere) results 
are possible in more general settings of transformations of arbitrary 
metric spaces having finite Hausdorff dimension, 
see~\cite{Boshernitzan-recurrence}. In particular, 
the following result holds.
\begin{theorem}[\cite{Boshernitzan-recurrence}, Theorem~1.5]\label{thm:rec2}
Let $(X,d)$ be a metric space and let $\mu$ be a probability measure on it
which coincides, for some $\alpha\in(0,\infty)$, with the $\alpha$-Hausdorff 
measure on $(X,d)$.  
Then, for any transformation $T\colon X\to X$ which preserves the measure $\mu$, we
have
\[
\liminf_{n \to \infty} n^{1/\alpha} d(x, T^n x) \leq 1,
\qquad \text{for $\mu$-almost every $x \in X$.}
\]
\end{theorem}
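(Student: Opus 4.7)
The plan is to argue by contradiction. Suppose there exists $c>1$ such that the set
\[
A=\Bigl\{x\in X\,:\,\liminf_{n\to\infty}n^{1/\alpha}d(x,T^nx)>c\Bigr\}
\]
has $\mu(A)>0$. Since $A$ is the monotone union of the sets $A_{N}=\{x\in A:d(x,T^nx)>c\,n^{-1/\alpha}\text{ for every }n\geq N\}$, fix $N_{0}$ with $\mu(A_{N_{0}})>0$ and, replacing $A$ by $A_{N_{0}}$, assume the inequality is uniform in $x\in A$.

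The hypothesis $\mu=\cH^{\alpha}$ is then exploited through the very definition of Hausdorff measure: for each $\delta>0$ one has a countable cover $\{U_{i}\}$ of $A$ by sets of diameter at most $\delta$ with $\sum_{i}(\operatorname{diam} U_{i})^{\alpha}$ arbitrarily close to $\cH^{\alpha}(A)=\mu(A)$. Put $V_{i}=A\cap U_{i}$. The key dynamical remark is that if $x\in V_{i}$ and $T^{n}x\in V_{i}$ with $n\geq N_{0}$, then both $x$ and $T^{n}x$ lie in $U_{i}$, so $d(x,T^nx)\leq\operatorname{diam}(U_{i})$; combining with $d(x,T^nx)>c\,n^{-1/\alpha}$ forces $n>M_{i}:=(c/\operatorname{diam}(U_{i}))^{\alpha}$. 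Hence the first-return time of $V_{i}$ into itself avoids the entire window $[N_{0},M_{i}]$.

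A standard disjointness argument now upper bounds $\mu(V_{i})$. Setting $k_{i}=\lfloor M_{i}/N_{0}\rfloor$, the translates $V_{i}, T^{-N_{0}}V_{i}, T^{-2N_{0}}V_{i},\ldots,T^{-k_{i}N_{0}}V_{i}$ have pairwise index differences lying in $[N_{0},M_{i}]$ and so are pairwise disjoint by the preceding remark; since $T$ preserves the probability measure $\mu$, this yields $\mu(V_{i})\leq 1/(k_{i}+1)\leq N_{0}(\operatorname{diam}U_{i})^{\alpha}/c^{\alpha}$. Summing over $i$ and letting $\delta\to 0$ gives $\mu(A)\leq(N_{0}/c^{\alpha})\mu(A)$, i.e.\ the suboptimal bound $c\leq N_{0}^{1/\alpha}$.

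The main obstacle is then to upgrade this to the sharp constant $c\leq 1$, that is, to absorb the parasitic factor $N_{0}$ produced by the possibility of \emph{short} returns (times $n<N_{0}$). My plan here is to pass to the first-return map on a reduced subset of $V_{i}$ from which short returns have been excised by construction; on such a subset the disjointness argument can be run with $M_{i}$ genuinely disjoint preimages instead of $M_{i}/N_{0}$. Combined with the upper-density estimate $\limsup_{r\to 0}\mu(B(x,r))/(2r)^{\alpha}\leq 1$, valid at $\mu$-almost every point for the $\alpha$-Hausdorff measure, this refinement should deliver $\mu(V_{i})\leq(\operatorname{diam}U_{i})^{\alpha}/c^{\alpha}$ up to a multiplicative error tending to $1$ as $\delta\to 0$, and therefore the sharp contradiction $c^{\alpha}\leq 1$.
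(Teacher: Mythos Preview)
The paper does not prove this theorem; it is quoted from \cite{Boshernitzan-recurrence} as background, so there is no in-paper argument to compare against. I therefore assess your sketch on its own merits.

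The first half is sound and is the standard opening move: after passing to $A_{N_0}$ and covering by small-diameter sets $U_i$, the disjointness of $V_i, T^{-N_0}V_i,\dots,T^{-k_iN_0}V_i$ yields $\mu(V_i)\le N_0(\operatorname{diam}U_i)^{\alpha}/c^{\alpha}$ and hence $c^{\alpha}\le N_0$. As you recognise, this is not a contradiction, since $N_0$ was produced by the problem and may be large.

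The genuine gap is in your proposed refinement. Excising short returns --- passing to $V_i'=\{x\in V_i: T^nx\notin V_i\text{ for }1\le n<N_0\}$ --- does make $V_i',T^{-1}V_i',\dots,T^{-\lfloor M_i\rfloor}V_i'$ pairwise disjoint, and hence $\mu(V_i')\le(\operatorname{diam}U_i)^{\alpha}/c^{\alpha}$. But to finish you would need $\mu(V_i)\le(1+o(1))\mu(V_i')$, and nothing in your outline provides this. In fact one can check that every $x\in V_i\setminus V_i'$ lands in $V_i'$ at some time $m\in[1,N_0)$ (take the last visit to $V_i$ before time $N_0$), so $V_i\setminus V_i'\subset\bigcup_{m=1}^{N_0-1}T^{-m}V_i'$ and therefore $\mu(V_i)\le N_0\,\mu(V_i')$; the parasitic factor $N_0$ is reproduced, not removed. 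The density bound $\limsup_{r\to0}\mu(B(x,r))/(2r)^{\alpha}\le 1$ controls $\mu(U_i)$ in terms of $(\operatorname{diam}U_i)^{\alpha}$, but it says nothing about the purely dynamical ratio $\mu(V_i)/\mu(V_i')$, so invoking it here does not close the gap. A further idea (present in Boshernitzan's original argument) is needed to pass from $c\le N_0^{1/\alpha}$ to $c\le 1$.
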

Now, denote by $\mu_{d}(\cdot)$ the Lebesgue measure on $\R^{d}$ where $d\geq1$.
Let also $\rho$ be a norm on $\R^d$ and $B_\rho$ be the unit ball for this norm.
Let $X \subset \R^d$ be a measurable set of finite non-zero measure and let
$T\colon X \to X$ be a transformation which preserves $\mu_d$.
Then Theorem~\ref{thm:rec2} implies that for almost every $x \in X$ we have
\[
\liminf_{n \to \infty} \ n^{1/d}\rho({x} - T^n{x})\leq 
2\left(\frac{\mu_{d}(X)}{\mu_{d}(B_{\rho})}\right)^{1/d}.
\]
In the particular case when $X = [0,1)^d$ is the unit cube,
the above inequality takes the form
\[
\liminf_{n \to \infty} \ n^{1/d}\rho({\bf x} - T^n{\bf x})\leq 
2\ \mu_{d}(B_{\rho})^{-1/d}.
\]
In the even more special case where $X = [0,1)$ and $\rho$ is
the absolute value, we obtain a weaker version of Theorem~\ref{thm:rec01}
with the constant $1$ instead of $\frac1{\sqrt5}$. The technique we use in this
article to determine 
the optimal constant $\frac1{\sqrt5}$ for $X = [0,1)$ does not seem to extend to 
higher dimensions; not even for the square $X=[0,1)^2$.


\subsubsection*{Singular vectors and the Dirichlet spectrum}
We have seen one extension of the Lagrange constant of 1-dimensional rotation
to interval exchange transformations. It can also be defined for higher-dimensional
rotations as follows. Given $\alpha \in \R^d$ we define, similarly to~\eqref{eq:lagrange_rot_dim1},
\[
L(\alpha) = \limsup_{q \to \infty}\ \frac{1}{q^{1/d}\,\DZ{q\alpha}}.
\]
where $\DZ{x}$ denotes the Euclidean distance to the nearest integer lattice point. In
both contexts, the Lagrange constant also has a natural $\liminf$ counterpart
that we discuss next.

Given $\alpha\in \R^d$ we define its \emph{Dirichlet constant} $D(\alpha)$ as
\[
D(\alpha) = \liminf_{n \to \infty} \frac{1}{n^{1/d} \epsilon_n(\alpha)}
\qquad \text{where} \qquad
\epsilon_n(\alpha) = \min_{1 \leq k \leq n} \DZ{k \alpha}
\]
Recall that a vector $\alpha$ is called \emph{singular} if $D(\alpha) = +\infty$
(such vector only exists if $d \not= 1$). The set of singular vectors is known
to be of zero measure in any dimension, and its Hausdorff dimension has recently
been computed by N. Chevallier and Y. Cheung~\cite{ChevallierCheung}. Recall
that for the Lagrange constant, a vector $\alpha$ such that $L(\alpha) < +\infty$
is called \emph{badly approximable}. For a Lebesgue generic $\alpha$ we have
$D(\alpha) = c_d$ and $L(\alpha) = +\infty$, where $0 < c_d < +\infty$ is a
constant that only depends on the dimension.

Similarly, if $T$ is an i.e.t. that satisfies the Keane condition we define
its \emph{Dirichlet constant} as
\[
D(T) = \liminf_{n \to \infty} \frac{1}{n \epsilon_n(T)}.
\]
The \emph{Dirichlet spectrum}\footnote{
In~\cite{Ferenczi-lagrange} the Dirichlet spectrum is called the \emph{upper
Boshernitzan-Lagrange spectrum}. The reason for this is that M. Boshernitzan
proved that for i.e.t. the condition $D(T) < +\infty$ implies unique
ergodicity~\cite{Boshernitzan-uniqueergodicity}.}  is the set of possible 
Dirichlet constants for a given class of systems 
(i.e., a dimension for rotations, or a number of intervals for i.e.t.s, are fixed).

For rotations (or 2-i.e.t.s), the Dirichlet spectrum has a structure similar to
the Lagrange spectrum: that is, it starts with a discrete sequence and contains
an interval (see the discussion and references in the introduction
of~\cite{AkhunzhanovShatskov2013}). But the situation changes dramatically when
one goes to higher dimensional situations.  For instance, for both the
2-dimensional rotations~\cite[Theorem~1]{AkhunzhanovShatskov2013} and 3-i.e.t.s
\cite[Theorem~4.14]{Ferenczi-lagrange} the Dirichlet spectrum is an interval.
Nothing seems to be known about the structure of Dirichlet spectrum
for rotations in $\R^3$ or 4-i.e.t.s.

\section{An unconventional packing problem in $\R^{2}$}\label{sec:pack}
Denote by $\C$ the set of complex numbers. Given $z = x + iy$ we define
$\Norm(z) = \sqrt{|xy|}$ (and $\Norm^2(z) = (\Norm(z))^2 = |xy|$). The
function $\Norm$ can be thought as a
generalization of a norm whose unit ball is the region delimited by the
hyperbolas $xy = \pm 1$. Unlike with a genuine norm, the unit ball of $\Norm$, namely 
$\{z\colon \Norm(z) \leq 1\}$,
is not convex. However, it is still star shaped, and satisfies $\Norm(tz) = |t| \Norm(z)$ 
for all real $t$.

Given a polygon $P$ with vertices $z_1$, $z_2$, \ldots, $z_n$, we define
its $\Norm$-perimeter as $\displaystyle p(P) = \sum_{i=1,\ldots,n}\limits \Norm(z_{i+1} - z_i)$ (where indices are taken modulo $n$).
Our main tool in the present paper is given by the following result.

\begin{theorem}[\cite{Smith}] \label{thm:delta_estimate_in_convex}
Let $\Gamma$ be a finite set of points in $\C$ such that $N(x-y) \geq 1$
for every pair of distinct points $(x,y)$ of $\Gamma$.
Let $C$ be its convex hull. Let $A$ and $p$ be respectively the area and
$\Norm$-perimeter of $C$. Then
\[
\# \Gamma \leq \frac{1}{\sqrt{5}} A + \frac{p}{2} + 1.
\]
Moreover, if equality holds, then the set $\Gamma$ is a subset of a golden
lattice.
\end{theorem}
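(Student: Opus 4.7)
The plan is a Pick-type estimate: triangulate the convex hull $C$ using the points of $\Gamma$ as vertices, establish a sharp local inequality for each triangle of the triangulation, and combine these via Euler's formula. To set up, I triangulate $C$ using all $V = \#\Gamma$ points as vertices; let $b$ of them lie on $\partial C$. Euler's formula together with the edge-incidence relation $3T = 2E_{\mathrm{int}} + b$ yields the Pick-type identity $V = \tfrac{1}{2}(T + b) + 1$, and since every boundary edge has $\Norm$-length at least $1$, one has $b \le p$.

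The crux of the argument is the local inequality that for every triangle $\Delta$ whose vertices $v_1, v_2, v_3$ satisfy $\Norm(v_i - v_j) \geq 1$,
\[
\frac{2\operatorname{area}(\Delta)}{\sqrt{5}} \;+\; \Norm(v_1 - v_2) + \Norm(v_2 - v_3) + \Norm(v_3 - v_1) \;\geq\; 4,
\]
with equality precisely when $\Delta$ is a primitive triangle of a golden lattice. The proof exploits the rich symmetry group preserving both $\Norm$ and Lebesgue area---translations, the diagonal scalings $(x,y) \mapsto (tx, y/t)$ with $t > 0$, the swap $(x,y) \mapsto (y,x)$, sign changes, and the reflection $(x,y) \mapsto (x,-y)$---to normalize $v_1 = 0$ and $v_2 = (1,1)$. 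A perturbation argument shows the minimum is attained when all three $\Norm$-distances equal $1$, and writing $v_3 = (b,c)$, the conditions $|bc| = 1$ and $|(b-1)(c-1)| = 1$ reduce to $b^2 - 3b + 1 = 0$, whose root $b = (3+\sqrt{5})/2 = \phi^2$ yields area $\sqrt{5}/2$ and realizes $1 + 3 = 4$.

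Summing the local inequality over the triangles of an appropriately chosen triangulation (for instance Delaunay with respect to $\Norm$, which tends to keep interior edges short) and combining with the Euler identity produces the target bound $V \leq A/\sqrt{5} + p/2 + 1$. In the equality case, the rigidity part of the local inequality propagates across shared edges and forces every triangle of the triangulation to be a primitive cell of a single common golden lattice, so $\Gamma$ itself lies on that lattice. The main obstacle is the local inequality: the feasible set of triples $(v_1, v_2, v_3)$ is non-convex and unbounded, and a careful case analysis is needed both to locate the golden-lattice minimizer and to prove its uniqueness. The summation step is also more subtle than a naive per-triangle sum would suggest, since a long interior edge weakens the aggregated bound; selecting the right triangulation, or refining the local estimate with an explicit edge-length dependence, is what ultimately extracts the sharp constant $1/\sqrt{5}$.
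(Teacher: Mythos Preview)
Your proposal has a genuine gap in the summation step, and the hand-wave at the end (``selecting the right triangulation, or refining the local estimate'') hides exactly the difficulty that carries the proof.

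Summing your local inequality over all $T$ triangles of a full triangulation of $C$ gives
\[
\frac{2A}{\sqrt5}\;+\;2\!\!\sum_{e\ \text{interior}}\!\!\Norm(e)\;+\;p \;\ge\; 4T.
\]
With your Euler identity $T=2V-b-2$ and the edge count $3T-b=2E_{\text{int}}$, this rearranges to the desired bound $V\le \frac{A}{\sqrt5}+\frac{p}{2}+1$ \emph{only if} $\sum_{e\ \text{interior}}\Norm(e)\le E_{\text{int}}$, i.e.\ the average interior edge has $\Norm$-length $\le 1$. But your hypothesis forces every $\Norm(e)\ge 1$, so the implication fails except in the rigid equality case. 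No choice of triangulation repairs this: each interior edge of $\Norm$-length strictly bigger than $1$ pushes the summed bound the wrong way.

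The paper avoids this by \emph{not} triangulating $C$. It takes the $L^\infty$-Delaunay triangulation of $\Gamma$ (maximal immersed axis-parallel squares), which has two features your argument lacks. First, every Delaunay triangle is \emph{admissible} (one vertex on each of three sides of its bounding rectangle), and for admissible triangles there is an exact area formula in the $\Norm^2$-lengths $\alpha,\beta,\gamma$ of the sides, from which one gets the clean bound $\area(\Delta)\ge\frac{\sqrt5}{2}\min(\alpha,\beta,\gamma)\ge\frac{\sqrt5}{2}$ with no perimeter term at all. Second, the Delaunay triangles fill only a simply connected region $U\subseteq C$, but one shows that the $\Norm$-perimeter of $U$ is at most that of $C$ (here the concavity of $\Norm$ on a quadrant reverses the triangle inequality). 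A direct count then gives that the number of triangles is at least $2\,\#\Gamma - p - 2$, and $A\ge\area(U)\ge\frac{\sqrt5}{2}(2\,\#\Gamma - p - 2)$ is exactly the claim. Note that non-admissible triangles (all three side slopes of the same sign) can have arbitrarily small area with all $\Norm$-sides $\ge 1$, so your area-plus-perimeter inequality cannot be upgraded to a uniform area bound on a general triangulation of $C$; restricting to admissible triangles is the essential move.
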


In the case of norms (i.e.,~when the unit ball is convex) the above result
was a conjecture of H.~Zassenhaus, which was proven in full generality by N.~Oler~\cite{Oler}.

\begin{theorem}[\cite{Oler}] \label{thm:delta_euclidean}
Let $\rho$ be a norm in $\R^2$ and let $\Gamma$ be a finite set of points
such that $\rho(x-y) \geq 1$ for all pairs $(x,y)$ of points of $\Gamma$.
Let $C$ be the convex hull of $\Gamma$, $A$ the area of $C$ and $p$ the
$\rho$-perimeter of $C$. Then
\[
\# \Gamma \leq \Delta(\rho) A + \frac{p}{2} + 1
\]
where $\Delta(\rho)$ is the critical determinant of the unit ball of $\rho$.
\end{theorem}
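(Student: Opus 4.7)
My plan is to follow the classical inductive argument of Oler. I would induct on $n = \#\Gamma$, with base cases $n \in \{1,2,3\}$ checked directly from the hypothesis $\rho(x-y)\ge 1$ (for $n=1$ the bound reads $1 \le 1$; for $n=2,3$ each pairwise distance contributes at least $1$ to the perimeter). For the inductive step, triangulate the convex hull $C = \operatorname{conv}(\Gamma)$ so that its vertex set is exactly $\Gamma$. Let $V = \#\Gamma$, let $B$ be the number of convex-hull vertices of $\Gamma$, and let $T$ be the number of triangles. Euler's formula combined with the handshake lemma for planar triangulations gives the identity $T = 2V - B - 2$. With this, the target inequality is equivalent to $T + B \le 2\Delta(\rho)\,A + p$; since every boundary edge has $\rho$-length at least $1$, one has $p \ge B$, so it remains to control the per-triangle area/perimeter trade-off.

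The crux is a local geometric lemma: for any triangle $\tau$ with vertices pairwise at $\rho$-distance at least $1$, a suitable combination of $\operatorname{area}(\tau)$ and the $\rho$-perimeter of $\tau$ dominates a constant depending on $\Delta(\rho)$. For a ``fat'' triangle the lemma follows from the definition of the critical determinant: the lattice generated by two sides has determinant $2\operatorname{area}(\tau)$ and (after a short argument) is admissible for the unit $\rho$-ball, giving $\operatorname{area}(\tau) \ge 1/(2\Delta(\rho))$. This naive lemma fails for ``thin'' nearly collinear triangles---three points at pairwise distances $1,1,2-\varepsilon$ have area tending to $0$ while still satisfying the separation---but in that regime the perimeter defect $\rho(uv)+\rho(vw)-\rho(uw)$ stays close to $2$, which compensates precisely via the $p/2$ term on the right.

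The main obstacle, and where the technical work lies, is packaging this local inequality so that summation across the triangulation produces the global bound cleanly: one needs weights chosen so that interior edges (shared by two triangles) cancel in pairs while boundary edges aggregate into $p$. Combined with $T = 2V - B - 2$, the summed inequality then yields exactly $\#\Gamma \le \Delta(\rho)\,A + p/2 + 1$. In the norm case treated here, convexity of the unit ball makes the admissible-lattice step slightly cleaner than in the $\Norm$-setting of Theorem~\ref{thm:delta_estimate_in_convex}, but the combinatorial summation remains the delicate part.
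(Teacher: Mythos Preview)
The paper does not prove this theorem; it is quoted from Oler~\cite{Oler} as background for Theorem~\ref{thm:delta_estimate_in_convex}, so there is no proof in the paper to compare yours against.

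On your outline itself: despite calling it an induction, what you actually describe after the base cases is a direct global argument via a full triangulation and the Euler count $T = 2V - B - 2$; the reduction to $T + B \le 2\Delta(\rho)A + p$ and the bound $p \ge B$ are fine. The substantive gap is the local lemma. Your claim that for a ``fat'' triangle the lattice $\Z u \oplus \Z v$ generated by two sides is admissible for the unit $\rho$-ball is false in general: knowing $\rho(u),\rho(v),\rho(u-v)\ge 1$ places no constraint on $\rho(u+v)$ or $\rho(2u-v)$. Already in the Euclidean case, the isosceles triangle with $|u|=|v|=1$ and apex angle $3\pi/4$ has all sides $\ge 1$, smallest angle $22.5^\circ$ (so not ``thin'' by any reasonable definition), yet $|u+v|=\sqrt{2-\sqrt{2}}<1$ and $\operatorname{area}=\sqrt{2}/4$, strictly below the putative bound $\sqrt{3}/4$. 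So the dichotomy ``area wins for fat triangles, perimeter defect wins for thin ones'' is an intuition, not a lemma; turning it into a sharp inequality that sums correctly across an \emph{arbitrary} triangulation is precisely the content of Oler's paper, and your last paragraph is honest that this work has not been supplied.

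By way of contrast, the paper's proof of the parallel result Theorem~\ref{thm:delta_estimate_in_convex} dodges both the induction and the weight-cancellation problem by committing to a \emph{specific} triangulation (the $L^\infty$-Delaunay one), which forces every triangle to be admissible in the sense that makes the per-triangle area bound of Corollary~\ref{cor:area_triangle} automatic. That device exploits the special shape of the $\Norm$-ball and does not transfer to a general norm $\rho$.
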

The \emph{critical determinant} $\Delta(C)$ of a centrally symmetric convex body
$C$ in $\C$ is defined as follows. A lattice $\Lambda \subset \C$ is said to be
$C$-admissible if $C \cap \Lambda = \{0\}$.  Then
\[
\Delta(C) := \min \{\det(\Lambda):\ \text{$\Lambda$ is $C$-admissible}\}.
\]
In Theorem~\ref{thm:delta_estimate_in_convex} the constant $\frac{1}{\sqrt{5}}$ is
also a critical determinant (for a star but non-convex body). This constant is achieved
exactly by the golden lattices. These two facts are just a reformulation of
Hurwitz Theorem (Theorem~\ref{thm:Hurwitz_rotation}).

The rest of this section is devoted to the proof of Theorem~\ref{thm:delta_estimate_in_convex}.
A complete proof is given in the  PhD thesis of N.~E.~Smith (a student of
H.~Zassenhaus), see~\cite{Smith}. Our proof uses the same path except that a delicate
induction is avoided by using Delaunay triangulations.

\begin{remark}
As pointed out in the review paper~\cite{Zassenhaus} a weaker version of
Theorem~\ref{thm:delta_estimate_in_convex} can be derived
from Theorem~\ref{thm:delta_euclidean} as shown in the PhD thesis
of Sr. M. R. von Wolff~\cite{vonWolff}. Namely, we always have
$N(z) \leq \rho(z)$ where $\rho(z) = (|x|+|y|)/2$. Hence
if $N(z) \geq 1$ then a fortiori $\rho(z) \geq 1$ and
Oler's result applies. Luckily the critical determinants are
the same for $N$ and $\rho$ equal to $1/\sqrt{5}$, though in the error term
the $\rho$-perimeter is generally larger than the $N$-perimeter.
Note that this weaker result would have been enough for our applications but we
prefer to include a self-contained and short proof of
Theorem~\ref{thm:delta_estimate_in_convex}.
\end{remark}

\begin{remark}
It would be tempting to conjecture that Oler's result actually holds
for centrally symmetric bodies. But this is actually false. Sr.\ M.\ R. von Wolff provided
a counterexample in her PhD thesis~\cite{vonWolff}.
\end{remark}

\subsection{Admissible triangles}
Given a triangle $(p,q,r)$ in $\C$, it is always inscribed in a smallest rectangle, 
namely the rectangle $R(p,q,r) = [x^-, x^+] \times [y^-,y^+]$ defined by
\[
\begin{array}{l@{\qquad}l}
x^- = \min(\Re(p),\Re(q),\Re(r)) & x^+ = \max(\Re(p),\Re(q),\Re(r)) \\
y^- = \min(\Im(p),\Im(q),\Im(r)) & y^+ = \max(\Im(p),\Im(q),\Im(r)).
\end{array}
\]

\begin{definition}
We call a triangle $(p,q,r)$ in $\C$ \emph{admissible} if the three points $p,q,r$ 
are on the boundary of the minimal rectangle $R(p,q,r)$ and no two of them are on the same side.
\end{definition}

\begin{remark}
One can alternatively define admissible triangles as triangles for which the sign of the slopes
of the sides are not all the same. This is the definition proposed in~\cite{Smith} on
page 7 in which admissible triangles are called \emph{type (a)}.
\end{remark}

Let $(p,q,r)$ be an admissible triangle. On the rectangle $R(p,q,r)$ exactly one vertex is in a corner. 
By convention we always label the sides $a,b,c$ so that the two sides $a,b$ 
are adjacent to that corner and $a$, $b$, $c$ are taken in counter-clockwise
order. (See Figure~\ref{fig:admissible_vs_nonadmissible} above.)
\begin{figure}[!ht]
\begin{center}
\includegraphics{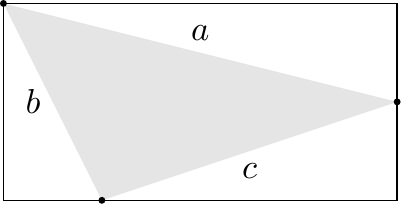} \hspace{1cm}
\includegraphics{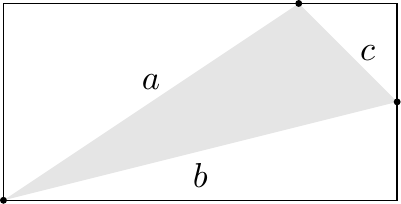} \hspace{1cm}
\includegraphics{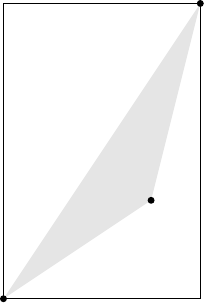}
\end{center}
\caption{Two admissible triangles and one non-admissible triangle.}
\label{fig:admissible_vs_nonadmissible}
\end{figure}

Recall that $\Norm\colon \C \to \R$ is the function defined by $\Norm(x+iy) = \sqrt{|xy|}$.
The main ingredient of the proof of Theorem~\ref{thm:delta_estimate_in_convex} is 
the following lemma which establishes the fact that the area of an
admissible triangle is completely determined by the $\Norm$-length of its sides.
\begin{lemma} \label{lem:formula_area_triangle}
Let $a,b,c$ be three sides of an admissible triangle 
$\Delta$ and let
\[
\alpha := \Norm(a)^2, \quad \beta := \Norm(b)^2, \quad \gamma := \Norm(c)^2.
\]
Then
\begin{equation}\label{eq:abcd}
\area(\Delta) = \frac{ \sqrt{\alpha^2 + \beta^2 + \gamma^2 - 2\alpha\beta + 
2\alpha \gamma + 2\beta \gamma}}{2}.
\end{equation}
\end{lemma}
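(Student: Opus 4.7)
The plan is to place the triangle in a standard position using the symmetries of $\Norm$ and then verify the claimed identity by direct computation.

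\emph{Normalization.} Translations of the triangle preserve all three side-vectors, hence preserve $\alpha,\beta,\gamma$ and $\area(\Delta)$. Moreover $\Norm$ is invariant under the dihedral group generated by $z\mapsto\bar z$ and $z\mapsto iz$, which acts transitively on the corners of any axis-aligned rectangle. Using these symmetries, I may assume that the minimal bounding rectangle of $\Delta$ is $R=[0,X]\times[0,Y]$ with $X,Y>0$, and that the corner vertex of $\Delta$ sits at the origin $p=(0,0)$. Admissibility then forces the other two vertices onto the top and right sides of $R$, so $q=(X,y)$ and $r=(x,Y)$ with $0<x<X$ and $0<y<Y$. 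The two sides adjacent to $p$ are then, up to sign, the vectors $(X,y)$ and $(x,Y)$, while the opposite side is $r-q=(x-X,\,Y-y)$.

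\emph{Computation.} The shoelace formula gives $\area(\Delta)=\tfrac12(XY-xy)$, and from the side-vectors above,
\[
\alpha = Xy,\qquad \beta = xY,\qquad \gamma = (X-x)(Y-y).
\]
The quantity under the radical in \eqref{eq:abcd} factors as
\[
\alpha^2+\beta^2+\gamma^2-2\alpha\beta+2\alpha\gamma+2\beta\gamma \;=\; (\alpha+\beta+\gamma)^2 - 4\alpha\beta,
\]
since $(\alpha+\beta+\gamma)^2=\alpha^2+\beta^2+\gamma^2+2(\alpha\beta+\alpha\gamma+\beta\gamma)$. Substituting the values above gives $\alpha+\beta+\gamma = Xy+xY+(X-x)(Y-y) = XY+xy$ and $4\alpha\beta = 4XY\cdot xy$, so the expression equals $(XY+xy)^2-4XY\cdot xy = (XY-xy)^2$. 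Taking positive square roots and dividing by $2$ yields $\area(\Delta)=\tfrac12(XY-xy)$, matching the direct computation and proving \eqref{eq:abcd}.

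\emph{Main obstacle.} There is no serious difficulty: the proof reduces to the one-line factorization $(\alpha+\beta+\gamma)^2-4\alpha\beta$ combined with a coordinate setup. The conceptual point worth stressing is that the labeling convention of the lemma is essential: the factorization pairs precisely the two sides $a,b$ adjacent to the corner (encoded in $\alpha\beta=XY\cdot xy$, which multiplies the rectangle area by the ``defect'' $xy$ subtracted from $XY$ in the area), and interchanging $c$ with $a$ or $b$ would destroy the identity.
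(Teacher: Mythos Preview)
Your proof is correct and follows essentially the same approach as the paper: normalize the triangle into a convenient position and verify the identity by direct computation. The paper normalizes more aggressively, using the diagonal flow $g_t$ and a homothety to reduce to $c=(1,-1)$ (hence $\gamma=1$) and two free parameters, whereas you keep four parameters $X,Y,x,y$ but compensate with the clean factorization $(\alpha+\beta+\gamma)^2-4\alpha\beta$, which makes the verification equally short; your remark that the convention singling out the corner-adjacent pair $a,b$ is what makes the sign pattern work is a useful observation not explicit in the paper.
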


\begin{proof}
Applying $g_t$ and an homothety of $1/\sqrt{\gamma}$ we can assume 
(up to symmetry) that $c=(1,-1)$ as shown 
in the picture below. 
\begin{center}
\includegraphics{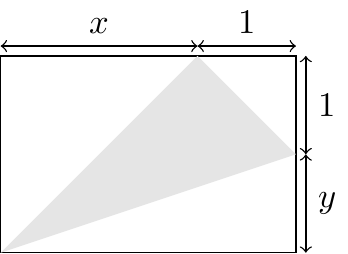}
\end{center}

Then we have \
$\alpha=x(y+1)$, 
$\beta=y(x+1)$, 
$\gamma=1$ \
while \ $\area(\Delta) = \frac{1+x+y}{2}$, and
the validation of the formula~\eqref{eq:abcd} becomes straightforward.
\end{proof}

\begin{corollary} \label{cor:area_triangle}
Let $\Delta$ be an admissible triangle with $a$, $b$, $c$, $\alpha$, $\beta$, $\gamma$
as in Lemma \ref{lem:formula_area_triangle}.
Let $m = \min \{\alpha,\beta,\gamma\}$ and $M = \max \{\alpha,\beta,\gamma\}$.
Then
\[
\frac{\sqrt{5}}{2} \ m \leq \area(\Delta) \leq \frac{\sqrt{m^2 + 4M^2}}{2} \leq \frac{\sqrt{5}}{2} \ M.
\]
If moreover $\area(\Delta) \leq \sqrt{2}\ m$ then
\[
\frac{\sqrt{M^2 + 4m^2}}{2} \leq \area(\Delta).
\]
\end{corollary}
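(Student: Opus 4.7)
The plan is to reduce everything to an explicit algebraic analysis of the quantity
\[
F(\alpha,\beta,\gamma) := (\alpha+\beta+\gamma)^2 - 4\alpha\beta,
\]
which, after a routine expansion of the radicand in formula~\eqref{eq:abcd}, satisfies $4\,\area(\Delta)^2 = F$. A second useful rewriting, obtained by completing the square in $\alpha-\beta$, is
\[
F = (\alpha-\beta)^2 + \gamma\bigl(\gamma + 2\alpha + 2\beta\bigr).
\]
Once these two identities are in hand, each assertion of the corollary reduces to a constrained optimization of $F$ on the box $[m,M]^3$ under the extra conditions that the minimum and maximum are attained.

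For the leftmost inequality $\area(\Delta)\geq\tfrac{\sqrt5}{2}m$, I would simply read off the second identity: $(\alpha-\beta)^2\geq 0$ and, using $\gamma\geq m$ and $\alpha+\beta\geq 2m$, the second summand is at least $m(m+4m)=5m^{2}$. For the middle upper bound $\area(\Delta)\leq \tfrac12\sqrt{m^{2}+4M^{2}}$, I would maximize $F$ on the feasible region. Since $\partial_\gamma F = 2(\alpha+\beta+\gamma)>0$, the maximum forces $\gamma=M$; substituting and noting $\partial_\beta F(\alpha,\beta,M)=2(\beta-\alpha+M)\geq 0$ whenever $\alpha\leq M$ (and symmetrically in $\alpha$), the maximum pushes $\alpha,\beta$ to $M$ as well, but the admissibility constraint $\min\{\alpha,\beta,\gamma\}=m$ forces one of $\alpha,\beta$ to equal $m$. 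A one-line computation then yields $F(m,M,M)=m^{2}+4M^{2}$. One must also dispose of the boundary cases $\gamma=m$ and $\gamma\in(m,M)$ (where the minimum constraint is realised elsewhere), but each of these is a two-variable quadratic on an interval whose critical/endpoint analysis gives strictly smaller values. The outermost inequality $\sqrt{m^{2}+4M^{2}}\leq\sqrt5\,M$ is trivial from $m\leq M$.

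The third claim is the main point and is handled by a case split according to which coordinate attains $M$:
\textbf{(A)} If $\gamma=M$, then restricting $F$ to the segment where $\alpha=m$, $\beta\in[m,M]$ is increasing in $\beta$ (as above), so $F\geq F(m,m,M)=M^{2}+4mM\geq M^{2}+4m^{2}$ unconditionally.
\textbf{(B)} If $\gamma=m$ and (say) $\alpha=M$, rewriting $F(M,\beta,m)=(\beta-(M-m))^{2}+4mM$ shows the minimum over $\beta\in[m,M]$ equals $M^{2}+4m^{2}$ when $M\leq 2m$, and equals $4mM$ when $M\geq 2m$. In the second subcase $F\geq 4mM\geq 8m^{2}$, so the hypothesis $\area(\Delta)\leq\sqrt2\,m$, i.e.\ $F\leq 8m^{2}$, forces equality throughout: $M=2m$ and $F=8m^{2}=M^{2}+4m^{2}$.
\textbf{(C)} If $\gamma\in(m,M)$, then $\{\alpha,\beta\}=\{m,M\}$, and $F(m,M,\gamma)$ is strictly increasing in $\gamma$, so $F>F(m,M,m)=M^{2}+4m^{2}$.
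In every case the conclusion $F\geq M^{2}+4m^{2}$ holds once $F\leq 8m^{2}$.

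The only place where a genuine idea (rather than calculus) is needed is subcase (B) of the third statement: a priori nothing prevents $F$ from sliding below $M^{2}+4m^{2}$ when $M$ is much larger than $m$, and the argument must exploit the hypothesis $F\leq 8m^{2}$ to rule this regime out. The rewriting $F(M,\beta,m)=(\beta-(M-m))^{2}+4mM$ is exactly what makes this transparent and I expect it to be the only slightly delicate step.
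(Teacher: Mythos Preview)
Your proposal is correct and follows essentially the same approach as the paper: both arguments reduce to a case analysis (according to which of $\alpha,\beta,\gamma$ realises $m$ and $M$) followed by one-variable optimization of the radicand $f$, with the hypothesis $\area(\Delta)\leq\sqrt2\,m$ invoked only in the subcase $m=\gamma$, $M\in\{\alpha,\beta\}$ to rule out the regime $M>2m$. Your completed-square identity $F(M,\beta,m)=(\beta-(M-m))^{2}+4mM$ is a marginally cleaner way to read off that subcase than the paper's tabulated extrema, but the content is the same.
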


\begin{proof}
Let us set $m=\min(\alpha,\beta,\gamma)$ and $M=\max(\alpha,\beta,\gamma)$.
By symmetry we can assume that $\alpha \geq \beta$.
Let $f(\alpha,\beta,\gamma) = \alpha^2 + \beta^2 + \gamma^2 - 2 \alpha\beta + 2\alpha\gamma + 2\beta\gamma$.
From Lemma~\ref{lem:formula_area_triangle} we have $\area(\Delta) = \sqrt{f(\alpha,\beta,\gamma)}/2$.

As $\alpha^2 + \beta^2 \geq 2\alpha\beta$, we have
\[
f(\alpha,\beta,\gamma)
\geq \gamma^2 + 2\alpha\gamma + 2\beta\gamma
\geq 5 m^2
\]
which proves the lower bound. For the upper bound, if $M=\alpha$ then we can use the fact that $-2 \alpha \beta + 2 \beta \gamma \leq 0$. If $M = \gamma$ then we use
\[
f(\alpha,\beta,\gamma)
= (\alpha-\beta)^2 + \gamma^2 + 2\alpha\gamma + 2\beta\gamma
\leq (\gamma-\beta)^2 + \gamma^2 + 2\alpha\gamma + 2\beta\gamma
\leq 4M^2 + m^2.
\]

To prove the last statement we analyze the function $f(\alpha,\beta,\gamma) = \alpha^2 + \beta^2 + \gamma^2 - 2 \alpha\beta + 2\alpha\gamma + 2\beta\gamma$. For each possibility of maximum and minimum, we just analyze $f$ as a one-variable function. The values of the extrema can be computed by elementary calculus. We summarize this information in the following array.
\[
\begin{array}{l|cccc}
\text{order on $\alpha,\beta,\gamma$}                 & \argmin(f)           & \min(f)                 & \argmax(f)             & \max(f)             \\ \hline
m = \beta\ \text{and}\ M = \gamma        & \alpha=m   & 4Mm + M^2   & \alpha=M & m^2 + 4 M^2 \\ \hline
m = \beta\  \text{and}\ M = \alpha       & \gamma=m   & M^2 + 4 m^2 & \gamma=M & 4 M^2 + m^2 \\ \hline
m = \gamma\ \text{and}\ M = \alpha       & & & & \\
 \alpha/\gamma \leq 2        & \beta=m   & M^2 + 4m^2 & \beta=M & m^2 + 4M m \\
 2 \leq \alpha/\gamma \leq 3 & \beta=M-m & 4M m       & \beta=M & m^2 + 4Mm \\
 3 \leq \alpha /\gamma       & \beta=M-m & 4M m       & \beta=m & M^2 + 4m^2 \\
\hline
\end{array}
\]
In the two first cases $m=\beta, M=\gamma$ or $m=\beta, M=\alpha$ we have the lower bound $f(\alpha,\beta,\gamma) \geq 4m^2+M^2$.
In the case $m=\gamma, M=\alpha$, the condition $\area(\Delta) \leq \sqrt{2}\ m$ implies that $\alpha/\gamma \leq 2$. Indeed,
if we had $M/m > 2$ then $\area{\Delta} \geq \sqrt{4Mm}/2 > \sqrt{2}\ m$. And
in the case $\alpha/\gamma \leq 2$, the lower bound $M^2 + 4m^2$ is valid.
\end{proof}

Note that the gap between $\sqrt{5}/2 \simeq 1.118$ and $\sqrt{2} \simeq 1.4142$ is not large. But having this gap is essential as it will allow us to get lower bounds from upper bounds in Section~\ref{sec:lagrange_iet} via the following lemma.
\begin{corollary} \label{cor:distorsion_from_area}
Let $\Delta$ be an admissible triangle with $a$, $b$, $c$, $\alpha$, $\beta$, $\gamma$
as in Lemma \ref{lem:formula_area_triangle}.
Let $m = \min \{\alpha,\beta,\gamma\}$ and $M = \max \{\alpha,\beta,\gamma\}$.
Assume that
$\area(\Delta) \leq \left(\frac{\sqrt{5}}{2} + \epsilon\right)m$,
for some\, $\epsilon$, $0 < \epsilon < \sqrt{2} - \frac{\sqrt{5}}2$.
Then $M \leq (1 + (2\sqrt{2} + \sqrt{5})\epsilon) m$.
\end{corollary}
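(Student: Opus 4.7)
The strategy is straightforward: apply the conditional lower bound supplied by Corollary \ref{cor:area_triangle} to turn the hypothesized upper bound on $\area(\Delta)$ into an upper bound on $M/m$, then finish with an elementary algebraic inequality.

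First I would observe that the upper bound $\epsilon < \sqrt{2} - \sqrt{5}/2$ is imposed precisely so that
\[
\area(\Delta) \leq \left(\tfrac{\sqrt{5}}{2} + \epsilon\right) m < \sqrt{2}\, m,
\]
which places the triangle in the regime where the last (conditional) clause of Corollary \ref{cor:area_triangle} applies, giving the matching lower bound
\[
\frac{\sqrt{M^2 + 4m^2}}{2} \leq \area(\Delta).
\]

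Chaining this lower bound with the assumed upper bound and squaring yields
\[
M^2 + 4m^2 \leq (\sqrt{5} + 2\epsilon)^2 m^2 = (5 + 4\sqrt{5}\,\epsilon + 4\epsilon^2)\, m^2,
\]
so $M^2 \leq (1 + 4\sqrt{5}\,\epsilon + 4\epsilon^2)\, m^2$. Taking square roots, it remains to verify the purely numerical inequality
\[
\sqrt{1 + 4\sqrt{5}\,\epsilon + 4\epsilon^2} \leq 1 + (2\sqrt{2} + \sqrt{5})\,\epsilon,
\]
valid for every $\epsilon > 0$. Squaring reduces this to $(2\sqrt{5} - 4\sqrt{2})\,\epsilon \leq (9 + 4\sqrt{10})\,\epsilon^2$, and since $2\sqrt{5} < 4\sqrt{2}$ (because $20 < 32$), the left-hand side is negative while the right-hand side is nonnegative, so the inequality is immediate.

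There is no genuine obstacle in this argument; once one recognizes that the hypothesis activates the last clause of Corollary \ref{cor:area_triangle}, everything reduces to squaring and comparing polynomials in $\epsilon$. The only mildly non-obvious point is the specific linear form $1 + (2\sqrt{2} + \sqrt{5})\epsilon$, which is not sharp (the sharp bound being the square-root expression above) but is presumably chosen because the coefficient $2\sqrt{2} + \sqrt{5}$ is convenient for the constants arising in the interval-exchange application of Section \ref{sec:lagrange_iet}.
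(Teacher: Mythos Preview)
Your proof is correct and follows the same path as the paper: invoke the conditional lower bound from Corollary~\ref{cor:area_triangle} (which the hypothesis $\epsilon < \sqrt{2} - \sqrt{5}/2$ activates), square to get $M^2 \leq (1 + 4\sqrt{5}\,\epsilon + 4\epsilon^2)m^2$, then finish with an elementary estimate. The only cosmetic difference is in that last step: the paper bounds $4\epsilon^2 < 4(\sqrt{2} - \sqrt{5}/2)\epsilon$ using the hypothesis on $\epsilon$ again and then applies $\sqrt{1+x} \leq 1 + x/2$, whereas you verify the squared inequality directly and observe that it holds for all $\epsilon > 0$.
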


\begin{proof}
Because $\epsilon < \sqrt{2} - \frac{\sqrt{5}}2$ the second half of Corollary~\ref{cor:area_triangle} holds: $M^2 + 4m^2 \leq 4 \left(\area(\Delta)\right)^2$. 
Using the hypothesis, we get
$M^2 + 4m^2 \leq 4 ( \frac{\sqrt{5}}2 + \epsilon)^2 m^2$ and hence 
$M^2 \leq (1 + 4\sqrt{5} \epsilon + 4\epsilon^2) m^2 < 
(1 + 4(\sqrt{2}+\frac{\sqrt{5}}2)\epsilon) m^2$. Taking square roots in this last inequality and applying the inequality $\sqrt{1+x} \leq 1+x/2$, which is valid for all $x > 0$, we get the result.
\end{proof}

\subsection{$L^\infty$-Delaunay triangulations}
For a general reference on Delaunay triangulations we refer the reader to~\cite{Okabe-tessellations}.

\begin{wrapfigure}{R}{0.45\textwidth}
\centerline{\includegraphics{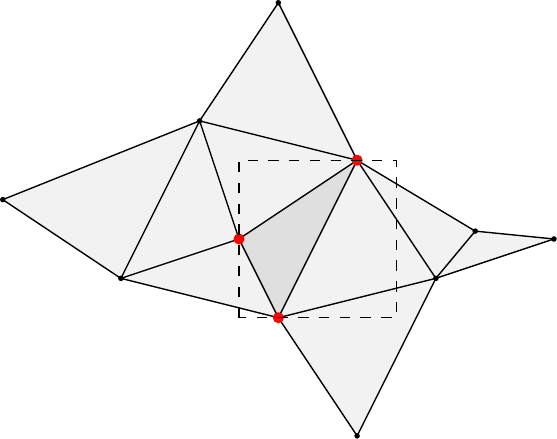}}
\caption{An $L^\infty$-Delaunay triangulation with one of the maximal squares $S$.}
\label{fig:delaunay}
\end{wrapfigure}
Let $\Gamma \subset \C$ be a finite set of points. A \emph{triangulation} of
$\Gamma$ is a set of triangles with disjoint interiors whose vertex set is
contained in $\Gamma$. Note that we have no maximality assumption here.
The $L^\infty$-\emph{Delaunay triangulation} of $\Gamma$ is defined as follows: 
a triangle with vertices $p,q,r \in \Gamma$ belongs to that triangulation 
if and only if there exists a square $S \subset \C$ with horizontal and vertical sides
such that $S \cap \Gamma = (\partial S) \cap \Gamma = \{p,q,r\}$. An example of a
Delaunay triangulation is provided in Figure~\ref{fig:delaunay}.

In some cases, there might be more than three points on the boundary of a
square. We will implicitly exclude the case where two points $z$ and $z'$
of $\Gamma$ are on the same horizontal or vertical line, as these
correspond to $\Norm(z - z') = 0$. Assuming that
$\displaystyle \min_{\stackrel{z,z' \in \Gamma}{z \not= z'}} \Norm(\Gamma) > 0$,
there are either three or four points on maximal squares. In the latter case, there is an
ambiguity as there are two different ways of making two triangles out of these
four points. We will abuse the terminology and still speak about \emph{the}
Delaunay triangulation for one of the triangulations obtained after making
a choice in each quadruple of points in a maximal square.

\begin{lemma} \label{lem:Linfinity_Delaunay_properties}
Let $\Gamma \subset \C$ be a finite set that contains at least three
points and is such that no pair of points are on the same horizontal or vertical
line. Let:
\begin{itemize}
\item[\em(a)] $C$ be the convex hull of $\Gamma$;
\item[\em(b)] $T$ be the finite collection of closed triangles determined by
 the $L^\infty$-Delaunay triangulation of $\Gamma$ (the interiors
 of these triangles are disjoint);
\item[\em(c)] $U=\bigcup_{\Delta\in T}\Delta$ be the union of all these triangles.
\end{itemize}
 Then the following statements hold.
\begin{itemize}
\item[\bf1.] The $L^\infty$-Delaunay triangulation $T$ contains only admissible triangles.
\item[\bf2.] The set $U$ is simply connected.
\item[\bf3.] The $\Norm$-length of $\partial U$ is smaller than the $\Norm$-length of $\partial C$
(where $\partial U$ and $\partial C$ stand for the boundaries of $U$ and $C$, respectively).
\end{itemize}
\end{lemma}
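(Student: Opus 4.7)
My plan is to handle the three assertions in order, with increasing subtlety. For part (1), I would unpack the definition of the $L^\infty$-Delaunay triangulation: a triangle $(p,q,r)\in T$ comes with an axis-aligned empty square $S$ whose boundary contains $\{p,q,r\}$. Because $R(p,q,r)\subseteq S$ and the general-position hypothesis forces each of the four extrema $x^\pm,y^\pm$ of $R(p,q,r)$ to be attained by a distinct vertex on a distinct side of $R$, a pigeonhole argument places exactly one vertex at a corner of $R(p,q,r)$ and the other two on the two non-adjacent sides, which is exactly admissibility.

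For (2), my plan is to show that $U$ is both connected and free of holes. Connectedness I would deduce from the connectedness of the dual $L^\infty$-Voronoi adjacency graph. For the absence of holes, suppose $C\setminus U$ has a bounded component $H$ surrounded by $U$, and pick $z\in H$. Starting from a small axis-aligned square $S\ni z$ with $S\cap\Gamma=\emptyset$, I grow and translate $S$ subject to $z\in S$ until its boundary successively captures three points $p_1,p_2,p_3\in\Gamma$, pivoting around the previously captured points at each stage. The resulting triple is a Delaunay triangle, and by choosing the pivoting directions consistently with the position of $z$ one can ensure that $z$ lies inside this triangle, contradicting $z\in H\subset U^c$.

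For (3), I would decompose $C\setminus U$ into its connected components (``pockets'') and compare, pocket by pocket, the $\Norm$-length of its $\partial U$-boundary with that of its $\partial C$-boundary. Lemma~\ref{lem:formula_area_triangle} and Corollary~\ref{cor:area_triangle} control individual admissible triangles, while the elementary identity
\[
\Norm(v+w)^2-(\Norm(v)+\Norm(w))^2=(\sqrt{ad}-\sqrt{bc})^2
\qquad(v=(a,b),\ w=(c,d)\text{ in a common quadrant})
\]
shows that $\Norm$ is \emph{super}additive on same-quadrant vectors. Since the inequality of (3) goes the other way, the crucial geometric input is that each pocket's $\partial C$-chain is forced to traverse a vertex extremal in $x$ or $y$; these axis crossings introduce sign changes in the quadrant decomposition that dominate the pointwise superadditivity and make the $\partial C$-chain the $\Norm$-longer of the two.

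I expect part (2) to be the main obstacle: the pivoting construction is geometrically natural but delicate to formalize, since the $L^\infty$ ``circumscribed square'' admits several sliding directions and one must keep $z$ inside the final triangle rather than merely inside the final square. Given (1) and (2), part (3) reduces to a careful bookkeeping of quadrant directions around each pocket, which is then accessible via the identity above and the admissible-triangle estimates already established.
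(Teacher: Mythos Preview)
Your treatment of part~(1) is correct and matches the paper's (one-line) argument. Your plan for part~(2) is more elaborate than the paper's---the paper simply observes that an edge $[p,q]$ lies on $\partial U$ iff there exist arbitrarily large empty axis-aligned squares meeting $\Gamma$ only in $\{p,q\}$, so no such edge can bound an interior hole---but your growing-square approach is a reasonable alternative, if heavier to formalize.

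The real problem is part~(3). You have the superadditivity identity right, but you have its consequence backwards. Superadditivity says that for same-quadrant steps $v_1,\dots,v_k$ one has $\Norm(v_1+\cdots+v_k)\ge\sum_i\Norm(v_i)$: the \emph{chord} is $\Norm$-longer than the broken path. The geometric fact you are missing (and which replaces your ``axis crossing'' mechanism entirely) is that over each convex-hull edge $[p,q]$ the corresponding portion of $\partial U$ is a \emph{monotone staircase}: one can choose $p=p_0,p_1,\dots,p_{n+1}=q$ on $\partial U$ with every increment $p_{i+1}-p_i$ lying in the \emph{same} open quadrant as $q-p$. Superadditivity then gives $\Norm(q-p)\ge\sum_i\Norm(p_{i+1}-p_i)$ directly, which is precisely the desired inequality $\Norm\text{-length}(\partial U)\le\Norm\text{-length}(\partial C)$ after summing over the edges of $C$. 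No quadrant sign changes on $\partial C$ are needed, and no pocket need contain an $x$- or $y$-extremal vertex (most do not). The references to Lemma~\ref{lem:formula_area_triangle} and Corollary~\ref{cor:area_triangle} are red herrings here: those are area estimates and play no role in the perimeter comparison.

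In short: keep your identity, drop the ``axis crossing'' story, and instead prove the monotone-staircase structure of $\partial U$ over each edge of $\partial C$. That is the missing step, and once you have it the inequality is immediate.
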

\vspace{1pt}
\begin{proof}
The first statement is immediate from the definition. Indeed, each triangle of $T$
is inscribed in a square with its three vertices on the sides (by definition of
the Delaunay triangulation) and since no pair of points of $\Gamma$ are on the same
horizontal or vertical line, they belong to different sides.

The segments $[p,q]$ on the boundary $\partial U$ of $U$ are the ones such that there
exist arbitrary large squares $S$ with $S \cap \Gamma = \{p,q\}$. Such a segment
needs to be on the boundary.

For the third statement, let $\gamma=[p,q]$ be an edge of the convex hull $C$
of $\Gamma$ that is not an edge of a triangle in $T$. (Thus $\gamma\in\partial C$ but 
$\gamma\notin \partial U$).
The line through $\gamma$ separates the plane into two regions, and one of them 
contains all points of $\Gamma$ except $p$ and $q$.
Without loss of generality we assume that $\Re(p) < \Re(q)$, $\Im(p) < \Im(q)$ and that
the points of $\Gamma$ are above the line through $p$ and $q$ as in the following picture.
\begin{center}
\includegraphics{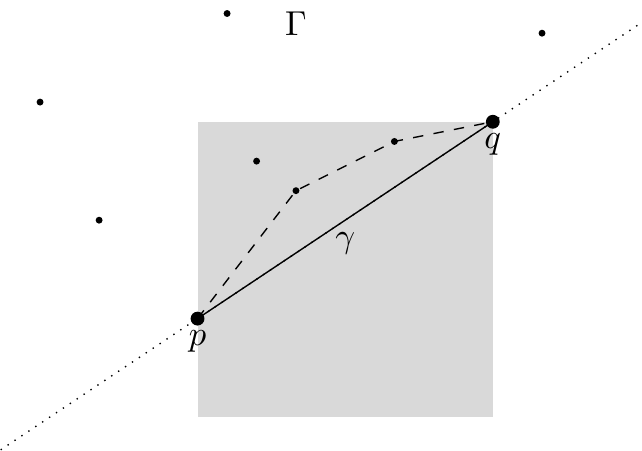}
\end{center}
The segment $\gamma$ is not an edge of a triangle in $T$ if and only if there are points 
in the square $S$ which admits $\gamma$ as a chord. 

Let $\Gamma_0$ be the set 
of points in the interior of $S$ that are different from $p$ and $q$.
Let $p_1$ be the point in $\Gamma_0$ with lowest imaginary part.
Now we proceed inductively until \ $\Gamma_n$ \ by defining
\[
\Gamma_n := \{x \in \Gamma_{n-1} \mid \Re(x) > \Re(p_n) \quad \text{and} \quad 
\Im(x) > \Im(p_n)\}.
\]
and, if $\Gamma_n\neq\emptyset$, we continue with picking\,  $p_{n+1}\in \Gamma_n$ with 
the lowest imaginary part.

Let  $p_1,\ldots,p_n$ be the points selected in the above way when the process stops
(i.e., when $\Gamma_n=\emptyset$). By adding two more points $p_{0}=p$ and $p_{n+1}=q$, 
we end up with the $(n+2)$ points
$p_0=p,p_1,\ldots,p_n,p_{n+1}=q$,  with the edges  $[p_{k},p_{k+1}]$, 
$0\leq k\leq n$, forming the contour $\phi_{p,q}$ of $\partial U$ 
between $p$ and $q$. 

Now, we claim that $\Norm(q-p) \geq \sum_{i=0}^n \Norm(p_{i+1} - p_i)$. This is to say
that the triangle inequality is actually reversed! It follows from the fact
that $\Norm$ restricted to the positive quadrant is concave.

This completes the proof of Lemma \ref{lem:Linfinity_Delaunay_properties}.
\end{proof}

\subsection{Proof of Theorem~\ref{thm:delta_estimate_in_convex}}\label{sec:proofpack}

Let $\Gamma \subset C$ be a finite set of cardinality $s$ and $C$ its convex hull.
Let $T$ be the $L^\infty$-Delaunay triangulation of $\Gamma$ and let 
$U$ be the union of the closed triangles in $T$ (notations just as in 
Lemma~\ref{lem:Linfinity_Delaunay_properties}). Next, we establish 
a lower bound (see \eqref{eq:bound-on-n}) on the number $n$ of triangles in $T$.

The set $\Gamma$  can be partitioned into the 
three subsets  $\Gamma=\Gamma_2\cup\Gamma_{bad}\cup\Gamma_{good}$
as follows:
\begin{enumerate}
\item The set $\Gamma_{2}$ of $2$ \emph{special} points that lie on the extreme left and extreme right of $\Gamma$,
\item The set $\Gamma_{bad}=(\Gamma\cap\partial U)\setminus\Gamma_2$ of 
$s_{bad}$ points that lie on $\partial U$ but not in 
$\Gamma_2$,
\item The set $\Gamma_{good}=\Gamma\setminus(\Gamma_2\cup\Gamma_{bad})$ of remaining $s_{good}=s-2-s_{bad}$ points that lie in the interior of $U$.
\end{enumerate}

Since the $\Norm$-distance between any two points of $\Gamma$ is at least 1 we
have that $s_{bad} + 2$ is smaller than the $\Norm$-perimeter of $U$. But
from Lemma~\ref{lem:Linfinity_Delaunay_properties} we know that the $\Norm$-perimeter
of $U$ is actually smaller than that of $C$. Hence $s_{bad} + 2 \leq p$.

Next, with each triangle $\Delta$ of $T$, we associate a point in $\Gamma$ as follows. There is
exactly one vertex of $\Delta$ for which the vertical line through that point 
intersects the interior of the triangle as in the following pictures:
\begin{center}
\includegraphics{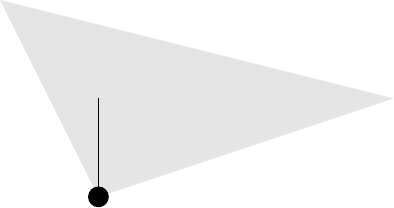} \hspace{1cm} \includegraphics{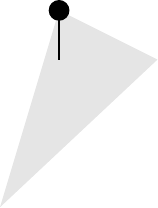}
\end{center}
It is easy to see that
\begin{enumerate}
\item for the (two) points in $\Gamma_{2}$, there are no associated triangles,
\item for each of the $s_{bad}$ points in $\Gamma_{bad}$ there is exactly one associated triangle,
\item for each of the $s_{good}$ points in $\Gamma_{good}$ there are exactly two associated triangles.
\end{enumerate}

In other words, the number of triangles in $T$ is given by the formula $n = 2
s_{good} + s_{bad}$. Now, substituting $s_{good} = s-2-s_{bad}$ and taking into
account the inequality $s_{bad}+2 \leq p$ we obtain
\begin{equation}\label{eq:bound-on-n}
n = 2 s_{good} + s_{bad} = 2s - s_{bad} - 4 \geq 2s - p - 2
\end{equation}
By Corollary~\ref{cor:area_triangle}, each of the triangles in $T$ has area 
at least $\frac{\sqrt{5}}{2}$. So
\[
A \geq \frac{\sqrt{5}}{2}\, n \geq \frac{\sqrt{5}}{2} \left(2s - p - 2 \right),
\]
and the inequality claimed in Theorem~\ref{thm:delta_estimate_in_convex} follows.

\subsection{The rectangular case}
The special case where $C\subset \R^{2}=\C$ is a rectangle with sides parallel to the 
coordinate axes is addressed by the following result. It is in this form that our packing
result will be used in Section~\ref{sec:interval_recurrence}.

\begin{theorem} \label{thm:rectangular}
Let $C=[x^-,x^+]\times[y^-,y^+]\subset\R^{2}$ be a rectangle of area 
$A = (x^+ - x^-)(y^+-y^-)$, and let $\Gamma \subset C$ be a finite subset
of cardinality $s\geq2$.
Set \ $\delta = \min_{\substack{x,y\in C\\ x\neq y}}\limits\, \Norm(x-y)$ and $A' = A/\delta^2$. Then 
\vspace{-2mm}
\begin{equation}\label{eq:rect1}
s\leq \frac{A'}{\sqrt 5}+ \sqrt{2 A'} + 1.
\end{equation}
In particular, for a given $\epsilon>0$,  we have
\begin{equation}\label{eq:rect2}
s\leq\left(\frac 1{\sqrt 5}+\epsilon\right)\cdot A'
\end{equation}
provided that either $A'$ or $s$  are large enough.
More precisely, for\, $0<\epsilon<1/2$, each of the following two conditions 
\begin{equation}\label{eq:twoconditions}
\text{either \quad {\bf(c1)}}\ A'>\tfrac4{\epsilon^2},\hspace{12mm} \text{or \quad {\bf(c2)}}\ 
s>(\tfrac3\epsilon+1)^2,
\end{equation}
suffices for the inequality \eqref{eq:rect2} to hold.
\end{theorem}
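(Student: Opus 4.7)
The plan is to reduce Theorem~\ref{thm:rectangular} to Theorem~\ref{thm:delta_estimate_in_convex} by a rescaling, and then to bound the $\Norm$-perimeter of any convex polygon inscribed in an axis-parallel rectangle. First I normalize: set $\Gamma' := \delta^{-1}\Gamma$ and $C' := \delta^{-1}C$. Since $\Norm(tz) = |t|\Norm(z)$, any two distinct points of $\Gamma'$ are at $\Norm$-distance at least $1$, and $C'$ is an axis-parallel rectangle of area $A' = A/\delta^2$ containing the convex hull of $\Gamma'$.

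Next I bound the $\Norm$-perimeter $p$ of the convex hull of $\Gamma'$. Write its edges as $(\Delta x_i, \Delta y_i)$ and let $w,h$ be the side lengths of $C'$ (so $wh = A'$). A closed polygon inside $C'$ has total horizontal travel $\sum_i |\Delta x_i| \leq 2w$ and total vertical travel $\sum_i |\Delta y_i| \leq 2h$. Applying Cauchy--Schwarz to the vectors $(\sqrt{|\Delta x_i|})_i$ and $(\sqrt{|\Delta y_i|})_i$ gives
\[
p \;=\; \sum_i \sqrt{|\Delta x_i||\Delta y_i|} \;\leq\; \Bigl(\sum_i |\Delta x_i|\Bigr)^{1/2}\Bigl(\sum_i |\Delta y_i|\Bigr)^{1/2} \;\leq\; \sqrt{2w\cdot 2h} \;=\; 2\sqrt{A'}.
\]
Combining with Theorem~\ref{thm:delta_estimate_in_convex} applied to $\Gamma'$ yields
\[
s \;\leq\; \frac{A'}{\sqrt 5} + \frac{p}{2} + 1 \;\leq\; \frac{A'}{\sqrt 5} + \sqrt{A'} + 1 \;\leq\; \frac{A'}{\sqrt 5} + \sqrt{2A'} + 1,
\]
which is \eqref{eq:rect1}.

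For the sharper inequality \eqref{eq:rect2}, it suffices to show $\sqrt{2A'}+1 \leq \epsilon A'$. Under condition \textbf{(c1)} with $A' > 4/\epsilon^2$ and $\epsilon < 1/2$, we have $\sqrt{A'} > 2/\epsilon > 4$, so
\[
\epsilon A' - \sqrt{2A'} \;=\; \sqrt{A'}\bigl(\epsilon\sqrt{A'} - \sqrt{2}\bigr) \;>\; 4\,(2-\sqrt{2}) \;>\; 1,
\]
which gives the claim. Under \textbf{(c2)}, I argue that it implies \textbf{(c1)}: if instead $A' \leq 4/\epsilon^2$, substituting into \eqref{eq:rect1} gives
\[
s \;\leq\; \frac{4}{\sqrt{5}\,\epsilon^2} + \frac{2\sqrt{2}}{\epsilon} + 1 \;<\; \frac{9}{\epsilon^2} + \frac{6}{\epsilon} + 1 \;=\; \Bigl(\tfrac{3}{\epsilon}+1\Bigr)^2,
\]
since $4/\sqrt{5}<9$ and $2\sqrt{2}<6$, contradicting \textbf{(c2)}.

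The only step involving any real idea is the Cauchy--Schwarz bound on the $\Norm$-perimeter; the rest is bookkeeping and elementary inequalities, so I do not anticipate a substantial obstacle.
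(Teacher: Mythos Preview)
Your argument is correct, and the approach for bounding the $\Norm$-perimeter is genuinely different from the paper's. The paper first applies the diagonal flow $g_t$ to reduce $C$ to a square of side $\sqrt{A'}$, then uses the pointwise inequality $\Norm(z)\leq \tfrac{\sqrt2}{2}|z|$ together with the monotonicity of Euclidean perimeter under convex inclusion to obtain $p\leq 2\sqrt{2}\sqrt{A'}$. You bypass both steps with a single Cauchy--Schwarz estimate on the edge vectors, getting the sharper bound $p\leq 2\sqrt{A'}$ directly and without any normalization; you then relax it to match the stated form. Your route is more elementary and yields a better constant, while the paper's route makes explicit use of the $g_t$-invariance that pervades the rest of the article. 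For the implications \textbf{(c1)}$\Rightarrow$\eqref{eq:rect2} and \textbf{(c2)}$\Rightarrow$\textbf{(c1)}, both proofs follow the same scheme (bound the error term, then contrapose), with only cosmetic differences in the numerical manipulations.
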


\begin{proof}
We show how to deduce Theorem~\ref{thm:rectangular} from
Theorem~\ref{thm:delta_estimate_in_convex}.
We adopt the notation used in these two results.

By Theorem \ref{thm:delta_estimate_in_convex}, we have 
$
s \leq \frac A{\delta^2 \sqrt 5}+\frac{p}{2\delta}+1
$
where $p$ is the $\Norm$-perimeter of the convex hull of $\Gamma$.
As all quantities in the above inequality are invariant by the linear
action of the diagonal flow $x+iy \mapsto e^t x + i e^{-t} y$ we can
assume that $C$ is a square with side length $\sqrt{A}$. The $\Norm$-perimeter
is always smaller than $\sqrt{2}/2$ times the euclidean perimeter (since the 
euclidean ball of radius $\sqrt{2}$ is contained in the $N$-ball of radius
$1$). Moreover, if $C_1 \subset C_2$ are two convexes, it is well known that
the euclidean perimeter of $C_1$ is smaller than the one of $C_2$.  Hence $p
\leq 2\sqrt{2}\, \sqrt{A}$. The equation~\eqref{eq:rect1} follows.

It remains to check the inequality $\frac{\sqrt{2A}+1}A<\epsilon$, assuming
that $0<\epsilon<1$ and that at least one of the conditions in
\eqref{eq:twoconditions}, either ({\bf c1}) or ({\bf c2}),  holds.

The condition {\bf(c1)}, $A>\frac4{\epsilon^2}$, implies that $\frac1{\sqrt A}< \frac1{2}$, and
then $\frac{\sqrt{2A}+1}A< \frac{2}{\sqrt A}<\epsilon$.

The condition {\bf(c2)}, $s>(\frac2\epsilon+1)^2$, implies that $s\geq25$, 
and then~\eqref{eq:rect1} implies
that $s\leq(\sqrt A+1)^2$. We obtain $\sqrt A\geq\sqrt s-1>\frac2\epsilon$,
and then $A>\frac4{\epsilon^2}$. Thus
{\bf(c2)} implies {\bf(c1)}, the case already established.

This completes the deduction of Theorem~\ref{thm:rectangular}  from Theorem~\ref{thm:delta_estimate_in_convex}.
\end{proof}

\section{Recurrence in the interval} \label{sec:interval_recurrence}
This section is dedicated to the proof of Theorem~\ref{thm:rec01}. In the first
part we prove a technical step involving an estimation
of the measure of points with a lower bound on the rate of recurrence. This proposition uses
the unconventional packing result of Theorem~\ref{thm:delta_estimate_in_convex} (in
its form given in Theorem~\ref{thm:rectangular}) and basic ergodic theory. In a second part we derive
Theorem~\ref{thm:rec01} using the Lebesgue density theorem.

\subsection{An estimate for the measure of badly recurrent points}
The following estimate is used in the proof of Theorem~\ref{thm:rec01}.
\begin{proposition}\label{prop.Vr}
Let \ $T\colon [0,1) \rightarrow [0,1)$ be a measurable map which 
preserves the Lebesgue measure $\mu$.  \\
Let \ $V\subset [0,1)$ be a non-empty open subinterval.
Set
\[
\rho(x,V)=\inf_{\substack{n\geq1 \\[.4mm] T^nx\in V}} \ n\cdot|T^n(x)-x|\quad
     \text{\em(for } \ x\in V),
\]
and for $r > 0$ define the subsets  $V_r=\{x\in V\mid \rho(x,V)\geq r\}\subset V$.
Then $\displaystyle \mu(V_r)\leq\frac{\mu(V)}{r\sqrt 5}$.
\end{proposition}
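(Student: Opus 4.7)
The plan is to combine Birkhoff's ergodic theorem with the packing bound of Theorem~\ref{thm:rectangular}, applied to the returns of individual $T$-orbits to $V_r$. For $x \in [0,1)$ and $N \geq 1$, I will consider the finite set
\[
\Gamma_N(x) := \{(T^j x,\, j) : 0 \leq j \leq N,\ T^j x \in V_r\},
\]
viewed as a subset of the rectangle $V \times [0,N] \subset \R^2$ of area $\mu(V)\,N$, and then sandwich $|\Gamma_N(x)|$ between a packing upper bound and an ergodic lower bound.

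The key step to verify is that any two distinct points of $\Gamma_N(x)$ lie at $\Norm$-distance at least $\sqrt{r}$. For $0 \leq j < k \leq N$ with $T^j x, T^k x \in V_r$, set $y := T^j x$; then $y \in V_r$ and $T^{k-j} y = T^k x \in V_r \subset V$, so the definition of $V_r$ forces $(k-j)\,|T^k x - T^j x| \geq r$, i.e.
\[
\Norm\!\bigl((T^k x - T^j x) + i(k-j)\bigr) = \sqrt{(k-j)\,|T^k x - T^j x|} \geq \sqrt{r}.
\]
Note that this argument uses only that the intermediate iterate $T^j x$ lies in $V_r$; the base point $x$ itself need not.

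With this separation in hand, Theorem~\ref{thm:rectangular}, applied with $\delta = \sqrt{r}$ and $A' = \mu(V)N/r$, yields $|\Gamma_N(x)| \leq \mu(V)N/(r\sqrt{5}) + O(\sqrt{N})$ as $N \to \infty$. On the other hand, $|\Gamma_N(x)| = \sum_{j=0}^{N} \mathbf{1}_{V_r}(T^j x)$, and since $T$ preserves $\mu$, Birkhoff's ergodic theorem produces a $T$-invariant function $\bar f$, defined $\mu$-a.e., with $|\Gamma_N(x)|/(N+1) \to \bar f(x)$ and $\int_{[0,1)} \bar f\, d\mu = \mu(V_r)$. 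Passing to the limit gives $\bar f(x) \leq \mu(V)/(r\sqrt{5})$ almost everywhere, and integrating over $[0,1)$ then gives $\mu(V_r) \leq \mu(V)/(r\sqrt{5})$. The main obstacle is the $\Norm$-separation step, which is the precise bridge between bad recurrence in $V$ and the packing geometry in $\R^2$; Theorem~\ref{thm:rectangular} and Birkhoff's theorem are then invoked in a standard fashion.
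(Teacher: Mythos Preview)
Your argument is correct, and in fact it is cleaner than the paper's own proof. The paper works with the \emph{induced} first return map $S\colon V\to V$: it records the visit times $F_n(x)$ and positions $S^n(x)$ for the returns of $x\in V$ to $V$, applies Kac's lemma to control $\int_V F\,d\mu$, and then Birkhoff's theorem to the return-time cocycle $F_n(x)/n\to G(x)$. Because it tracks \emph{all} returns to $V$ (not just to $V_r$), it must pass to a pruned subset $\Gamma'_q(x)\subset\Gamma_q(x)$ consisting of those returns that are $\Norm$-separated from all later ones, observe that $V_r\cap\Gamma_q(x)\subset\Gamma'_q(x)$, and only then apply Theorem~\ref{thm:rectangular}. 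The final integration over $V$ combines the packing bound with the Kac identity $\int_V G\,d\mu\leq 1$.

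Your route bypasses all of this: by restricting from the outset to visits to $V_r$ rather than to $V$, you get the $\sqrt{r}$ separation for free, and by summing $\mathbf 1_{V_r}\circ T^j$ over the whole space $[0,1)$ you avoid the induced map, Kac's lemma, and the auxiliary function $G$. In fact Birkhoff is not even needed in your version: since the packing bound $|\Gamma_N(x)|\leq \mu(V)N/(r\sqrt5)+c\sqrt N+1$ holds for \emph{every} $x$, integrating it directly over $[0,1)$ and using $\int \mathbf 1_{V_r}(T^jx)\,d\mu=\mu(V_r)$ already gives $(N+1)\mu(V_r)\leq \mu(V)N/(r\sqrt5)+c\sqrt N+1$, and letting $N\to\infty$ finishes. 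The paper's approach is more in the spirit of working ``inside $V$'', which is natural if one thinks of the problem as a recurrence question for the induced system; your approach exploits that the packing estimate is uniform in $x$, which makes the averaging step trivial.
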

The above proposition is trivial for $r\leq\frac1{\sqrt 5}$
(because then it follows immediately from the inclusion $V_r\subset V$).
Note also that one can recover the recurrence rate $r(T,x)$ from $\rho(x,V)$ as
\[
r(T,x) = \liminf_{\epsilon_1,\epsilon_2 \to 0} \rho(x, (x-\epsilon_1,x+\epsilon_2)).
\]

\begin{proof}
Let $S\colon V\to V$  be the first return map on $V$ induced by $T$.
Thus
\[
S(j)=T^{F(x)}(x) \quad \text{(for a.a. $x\in V$)},
\]
where $F(x)$ is the minimal integer $n\geq1$ such that $T^n(x)\in V$.
By Kac's lemma, the function $F\colon V\to\N$  is defined  almost everywhere and
$\int_{V} F(x)\,dx\leq1$. For $n\geq0$, we have
\[
S^n (x)=T^{F_n(x)}(x), \quad \text{where } 
  \  F_n(x)=\sum_{k=0}^{n-1} F(S^k x)   \quad \text{(for a.a. $x\in V$}).
\] 
By the Birkhoff Ergodic Theorem, the pointwise convergence
\begin{equation}\label{eq:individ1}
\lim_{n\to\infty}\frac{F_n(x)}n=G(x) \quad \text{(for a.a. $x\in V$)},
\end{equation}
to some integrable function $G(x)\geq1$ takes place, and
\begin{equation}\label{eq:integralG1}
\int_{V}G(x)\,dx=\int_{V} F(x)\,dx\leq1.
\end{equation}

Denote by $V'$ the subset of those $x\in V$ for which all the values $S^n(x)$,
$F_n(x)$ for all $n \geq 0$ and $G(x)$ are defined and~\eqref{eq:individ1}
holds. Clearly, $\mu(V')=\mu(V)$.

Fix $x\in V'$ and let $\{z_{n}\}_{n\geq0} = \{x_n+ iy_n\}_{n\geq 0}$ be 
the sequence in $\C$ defined by the formula
\begin{align*}
x_n&=\Re(z_n)=F_n(x)\in\N\cup\{0\}, \\
y_n&=\Im(z_n)=T^{x_n}(x)=T^{F_n(x)}(x)=S^n(x)\in V.
\end{align*}
(Thus $\{x_n\}$ is a strictly increasing sequence of integers, 
with $x_0=0$, $z_0=it$). For an integer $q\geq 1$, let 
\begin{align*}
\Gamma_q=\Gamma_q(x)&=\{z_n\mid n\in[0,q-1]\}=\{z_0,z_1,\ldots,z_{q-1}\},\\
\Gamma'_q=\Gamma'_q(x)&=\big\{z_n\mid n\in[0,q-1], \ \text{\small and }  \Norm^{2}(z_n-z_m)\geq r, \ 
    \forall m\in [n+1,q-1]\}\big\} \subset \Gamma_q,\\
\Gamma''_q=\Gamma''_q(x)&=\Gamma_q\setminus\Gamma'_q=\{z_n\mid n\in[0,q-2], \ \text{\small and }
         \exists m\in [n+1,q-1] \ \text{\small \ such that }  \Norm^{2}(z_n-z_m)<r\}.
\end{align*}
Let also $C_q(x)=[0,x_q] \times V$ and $A_q(x)=\area(C_q(x))=\mu(V)\,x_q$. Given $\Gamma \subset \C$ as
the one defined above, we set
\begin{equation} \label{eq:delta_definition}
\delta(\Gamma) = \min_{z,z' \in \Gamma} \Norm (z - z').
\end{equation}

Next we establish the following inequality:
\begin{equation}\label{eq:ineqGpq}
M_x\colon\!=\limsup_{q\to\infty}\limits \frac {|\Gamma'_q(x)|}q\leq\frac{\mu(V)\,G(x)}{r\sqrt5} \quad \ \text{(for $x\in V'$).}
\end{equation}
Let us fix $x\in V'$ and chose an increasing sequence of positive integers $\{q_n\}_{n \geq 0}$ so that
\begin{equation}\label{eq:mt}
M_x=\lim_{n\to\infty}\limits \frac {|\Gamma'_{q_n}|}{q_n}.
\end{equation}
Let us also fix $\epsilon>0$. Then the inequality
\[
|\Gamma'_{q_n}|\leq \frac{A_{q_n}}{\delta^{2}(\Gamma'_{q_n})}\left(\frac1{\sqrt5}+\epsilon\right)
\]
holds for all large $n$, in view of \eqref{eq:rect2} in Theorem~\ref{thm:rectangular} and where $\delta$ is defined
by~\eqref{eq:delta_definition}.
(Note that $\Gamma'_q\subset C_q$ and $\lim_{n\to\infty}\limits|\Gamma'_{q_n}|=\infty$ because otherwise $M_x=0$ and~\eqref{eq:ineqGpq} becomes trivial).

Since $\delta^{2}(\Gamma'_q)\geq r$ (in view of the definition of $\Gamma'_q$) and $\epsilon>0$ is arbitrary, we get \
$
 \limsup_{n\to\infty}\limits \ \frac {|\Gamma'_{q_n}|}{A_{q_n}}\leq 
  \frac 1{r\sqrt5}
$
 \ and hence \
$
\limsup_{n\to\infty}\limits \frac {|\Gamma'_{q_n}|}{x_{q_n}}\leq \frac{\mu(V)}{r\sqrt5}
$
 \ (as $A_q=\mu(V) x_q$). 
Taking in account \eqref{eq:mt} and that \
$\lim_{q\to\infty}\limits\frac{x_q}q=\lim_{q\to\infty}\limits\frac{F_q(x)}q=G(x)$,  \
we obtain
$
M_x=\limsup_{q\to\infty}\limits \frac {|\Gamma'_{q_n}|}{q_n}\leq\frac{\mu(V)\,G(x)}{r\sqrt5},
$
and the inequality \eqref{eq:ineqGpq} follows.

Next we observe the inclusions $V_r\cap \Gamma_q(x)\subset \Gamma'_q(x)$ 
(see the definition of $V_r$ in Proposition~\ref{prop.Vr}) and conclude that
\begin{equation}\label{eq:1vdsum}
\sum_{n=0}^{q-1}\limits 1_{V_r}(S^n(x))=|V_r\cap \Gamma_q(x)|\leq|\Gamma'_q(x)|
\quad \ \text{(for $q\geq2$)}
\end{equation}
where $1_{V_r}$ stands for the characteristic function of the set $V_r$. 
Since the map  $S\colon V'\to V'$ is measure preserving, integrating~\eqref{eq:1vdsum} results in the inequality
\[
q\mu(V_r)=\int_{V'}\Big(\sum_{n=0}^{q-1} 1_{V_r}(S^n(x))\Big)\,dx
  \leq \int_{V'}|\Gamma'_q(x)|\,dx,
\]
whence
\[
\mu(V_r) \leq \int_{V'}\frac{|\Gamma'_q(x)|}q\,dx \leq 
    \int_{V'}\Big(\sup_{p\geq q}\frac{|\Gamma'_p(x)|}p\Big)\,dx.
\]
Passing to the limit $q\to\infty$, we get
\[
\mu(V_r)  \leq \int_{V'}\!\Big(\limsup_{q\to\infty}\frac{|\Gamma'_q(x)|}q\Big)\,dx \leq
   \int_{V'} \frac{\mu(V)\,G(x)}{r\sqrt 5}\,dx\leq\frac{\mu(V)}{r\sqrt 5},
\]
in view of \eqref{eq:integralG1} and \eqref{eq:ineqGpq}.
This completes the proof of Proposition \ref{prop.Vr}.
\end{proof}

\subsection{Derivation of Theorem \ref{thm:rec01} (from Proposition  \ref{prop.Vr})}
\label{sec:derivation}
\begin{proof}[Proof of Theorem~\ref{thm:rec01}]
Let $r>\frac1{\sqrt5}$ be given. Let $\Phi_N$ 
be the finite collections of subintervals of $[0,1]$ defined as below
\[
\Phi_N=\big\{[\tfrac nN, \tfrac {n+1}N]\mid 0\leq n\leq N-1\big\} \quad (\text{for }N\geq2).
\]
Each $\Phi_N$  partitions
the unit interval $[0,1]$ into $N$ subintervals of equal lengths $\tfrac1N$ (up to their boundaries).

Recall that for every subinterval $V\subset [0,1]$, the inequality
$\mu(V_r)\leq\frac {\mu(V)}{r\sqrt 5}$ \
holds (by Proposition \ref{prop.Vr}). In particular, 
\[
\mu(V_r)\leq\tfrac 1{Nr\sqrt 5}, \quad \text{for any } V\in \Phi_N.
\]

Next we introduce two sequences of sets
\[
W_N=\!\!\bigcup_{V\in\Phi_N}\limits \!\!V_r; \qquad  W'_N=\bigcap_{n\geq N}\limits \! W_n
\qquad \text{(for } \ N\geq2),
\]
and two additional sets
\begin{align}
W=&\bigcup_{N\geq2}W'_N= \liminf_{N\to\infty} \ W_N= 
    \big\{x\in[0,1]\ \big|\  x\in W_N, \ \text{\small for all $N $
    large enough}\big\},\label{eq:W}\\
U=&\ [0,1]\!\setminus\! W= \{x\in[0,1] \mid x\notin W_N, \ 
    \text{\small for infinitely many $N$} \big\}.\label{eq:U}
\end{align}
Every subinterval  $J\subset[0,1]$  can be covered by $[\mu(J)N]+2$ subintervals 
from the collection $\Phi_N$, so
\[
\limsup_{N\to\infty}\ \mu(W_N\cap J) \leq \limsup_{N\to\infty}
   \big(\tfrac{[\mu(J)N]+2}{Nr\sqrt 5}\big)
   =\tfrac1{r\sqrt 5}\cdot\mu(J),
\]
and hence, for every $N\geq2$,
\[
\mu(W'_N\cap J) \leq 
   \liminf_{n\to\infty}\,\mu(W_n\cap J)\leq \tfrac1{r\sqrt 5}\cdot\mu(J).
\]
Since  $J\subset[0,1]$ is arbitrary and $\tfrac1{d\sqrt 5}<1$, the sets  $W'_N$ cannot have Lebesgue density 
points, thus  $\mu(W'_N)=0$, and hence $\mu(W)=0$.
It follows that  $\mu(U)=\mu([0,1]\!\setminus\!W)=1$. 

Next fix $x\in U$ and set $H_x=\{n\geq2\mid x\notin W_n\}\subset\N$. By definition
of $U$ (see \eqref{eq:U}), the set $H_x$ is infinite.
Fix $N\in H_x$. Observe that \ $x\in V$, 
for some $V\in\Phi_N$  (as $\bigcup_{V\in\Phi_N}\limits \!\!V=[0,1]$), 
while \ $x\notin V_r$ \ (as $x\notin W_N$). It follows that
\[
\rho(x,V)=\inf_{\substack{n\geq1 \\[.4mm] T^nx\in V}}  n\cdot|T^n(x)-x|<r.
\]
We obtain (in view of the implication \ 
$\{x,T^nx\}\subset V\in \Phi_{N}\!\implies\! |T^nx-x|\leq\frac1N$) \ that
\[
\inf_{\substack{n\geq1 \\[.4mm] |T^nx-x|\leq\frac1N}}\!\!  n\cdot|T^n(x)-x|< r, \qquad 
   \text{for all } \ N\in H_x.
\]
Taking in account that $H_x$ is infinite, one concludes that $r(T,x)=\liminf_{n\to\infty} \ n\,|T^n(x)-x|\leq r$.
As the selection $x \in U$ and $r>\frac1{\sqrt5}$ are arbitrary and $\mu(U)=1$, the proof
of Theorem~\ref{thm:rec01}  follows.
\end{proof}

\section{Lagrange constants of interval exchange transformations}
\label{sec:lagrange_iet}
Our proof of Theorem~\ref{thm:lagrange} uses translation surfaces that can be thought
as suspension flows of interval exchange transformations. The Lagrange spectrum
of an interval exchange transformation will now be studied through the
$\Norm$-norm of edges that are part of some specific triangulation.

\subsection{From interval exchange transformations to translation surfaces}
%
%
First, we define translation surfaces. For more details, we invite the reader to
consult the survey by Masur and Tabachnilov~\cite{MasurTabachnikov}.

A \emph{translation surface} is a surface that is obtained from gluing 
$2d$ euclidean triangles by identifying their edges by translation. The simplest example
are tori $\C / \Lambda$ where $\Lambda$ is a lattice. The torus $\C / \Lambda$
with $\Lambda = \Z u \oplus \Z v$ is obtained by gluing together the two triangles
with sides respectively $(u, -u+v, -v)$ and $(v, -u, u-v)$. In that case we have $d=1$.
(See Figure~\ref{fig:torus}).
\begin{figure}[!ht]
\begin{center}\includegraphics{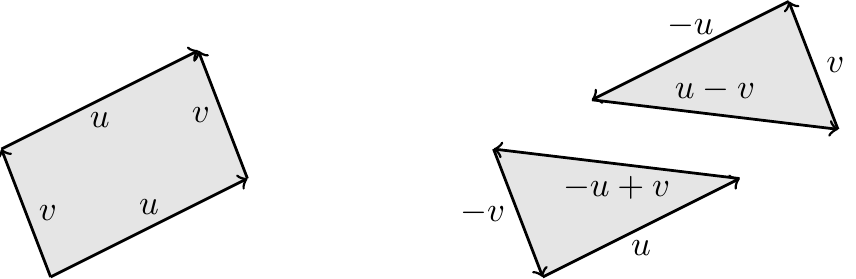}\end{center}
\caption{The fundamental domain of the torus seen as the gluing of two
euclidean triangles by translations.}
\label{fig:torus}
\end{figure}
Equivalently, a translation surface is a compact surface $X$ endowed with an
atlas defined on $X$ minus a finite (non-empty) set of points $\Sigma \subset
X$ with values in $\C$ such that coordinate changes are translations. (We assume
that the atlas is maximal for sake of uniqueness). It is easy to see that a
surface glued from euclidean triangles has such a geometric structure. Conversely,
on a translation surface there always exists a triangulation whose edges are flat
segments, and therefore the two definitions are equivalent.

Two translation surfaces $(X,\Sigma)$ and $(X',\Sigma')$ are isomorphic if
there exists a homeomorphism $\phi:X \to X'$ such that $\phi(\Sigma) = \Sigma'$
and for every chart $g: U \to \C$ of $X'$, $g \circ \phi$ is a chart of $f$. If the two
surfaces are given by a triangulation, the surfaces are isomorphic if and only if
one can pass from one to the other using edge flips (that is, we are
allowed to paste two triangles together and cut the resulting quadrilateral
along the other diagonal).

A point in the surface that is not a vertex of a triangle is
called a \emph{regular point}. The (image of the) vertices in the surface are called
the \emph{singularities}. As there are identifications, there can be fewer singularities
in the surface than vertices. Around each singularity, there is a well defined angle that
is a multiple of $2\pi$. If a translation surface $X$ is made of $2d$ triangles
then the set of conical angles of the singularities $2k_1 \pi, 2k_2 \pi,
\ldots, 2k_n \pi$ satisfies the relation $d = k_1+k_2+\ldots+k_n$. In the torus
of Figure~\ref{fig:torus}, we have $d=1$, $n=1$ and $k_1=2\pi$.

A translation surface inherits a translation structure: that is, given a point $p$ on the surface
and a vector $v \in \C$ one can define $p + v$ on the surface unless the segment
$p + tv$ contains a vertex for some $t$ with $0 \leq t < 1$. In general, we do not have $p + (v+w) = (p + v) + w$.
The \emph{(vertical) translation flow} on $X$ is the flow defined (almost everywhere) by
$\phi_t(p) = p + t \sqrt{-1}$.

A \emph{saddle connection} in a translation surface is a straight-line segment
that joins two singularities.

An horizontal segment $I$ in $X$ is \emph{admissible} if the orbits of its left and 
right extremities under the translation flow have the following property: either
in the past or in the future, the orbit hits a singularity before returning to
the interval.

\begin{proposition}
Let $X$ be a translation surface made of $2d$ triangles and $I \subset X$ an admissible
horizontal interval.
The first return map $T$ of the flow on $I$ is an interval exchange
transformation. Furthermore, if $X$ has no vertical saddle connection then $T$
satisfies the Keane condition and the number of subintervals in $T$ is $d+1$.
\end{proposition}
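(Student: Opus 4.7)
The plan is to verify the three parts of the statement in order: that $T$ is an i.e.t., that $T$ satisfies the Keane condition under the no-vertical-saddle-connection hypothesis, and that the number of subintervals equals $d+1$.

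To see that $T$ is an i.e.t., I would define $T(x) = \phi_{\tau(x)}(x)$ where $\phi_t$ is the vertical translation flow and $\tau(x) = \inf\{t > 0 : \phi_t(x) \in I\}$. Admissibility of $I$ together with compactness of $X$ forces $\tau$ to be finite off a finite set of exceptional points $D \subset I$: these are precisely the first intersections with $\bar I$ of the downward vertical separatrices emanating from the singularities, that is, the points of $I$ whose upward orbit meets a singularity of $X$ before returning to $I$. On each component of $I \setminus D$ the return time $\tau$ is constant, and $T$ is, via translation-surface chart changes composed with a vertical flow of fixed duration, a horizontal translation; this yields the i.e.t.\ structure.

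For the Keane condition I would argue by contraposition. A singularity $x$ of $T$ is the footpoint of a downward separatrix from some singularity $p$ of $X$; if $T^n(x) = y$ is another singularity of $T$, arising as the footpoint of a downward separatrix from a singularity $q$ of $X$, then concatenating the separatrix $p \rightsquigarrow x$ in $X$, the $n$ vertical excursions encoding $T^n(x) = y$, and the upward separatrix $y \rightsquigarrow q$ produces a vertical saddle connection on $X$ from $p$ to $q$. The converse decomposition is immediate, so the Keane condition for $T$ is equivalent to the absence of vertical saddle connections in $X$.

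For the count $k = d+1$ of subintervals, I would use the classical genus formula $k = 2g + n - 1$ relating the number of i.e.t.\ intervals to the genus $g$ and the number of singularities $n$ of the ambient translation surface, together with the Euler characteristic computation on the triangulation from the excerpt: $V - E + F = n - 3d + 2d = 2 - 2g$, which rearranges to $n + 2g = d + 2$ and hence $k = d + 1$. The identity $k = 2g + n - 1$ is itself proved by the rectangular decomposition of $X$ associated with $T$: cutting $X$ along $I$ and along each downward separatrix (extended until it first meets $\bar I$) produces $k$ open pieces, which are the return rectangles of $T$; applying Euler's formula to this CW complex and comparing with $\chi(X) = 2 - 2g$ yields the formula. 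This last bookkeeping is the main obstacle: one must check that the complement is truly a disjoint union of open rectangles (which uses the no vertical saddle connection hypothesis), that each singularity $p$ of cone angle $2\pi k_p$ is distributed with its correct multiplicity across the rectangles meeting at $p$, and that the endpoints of $I$ are placed correctly in view of admissibility; a careless count easily gives a value off by a small constant.
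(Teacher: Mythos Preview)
The paper does not prove this proposition at all; it is stated as a background fact and the text moves on immediately. So there is no ``paper's own proof'' to compare against. Your sketch is the standard argument and is correct in outline: the first two parts are routine, and for the count you combine the Euler computation on the given triangulation, $V-E+F=n-3d+2d=2-2g$, with the zippered-rectangle identity $k=2g+n-1$ to obtain $k=d+1$.

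One remark on the third part: rather than going through $k=2g+n-1$ via a second CW complex, you can count discontinuities of $T$ directly. A singularity of cone angle $2k_i\pi$ emits exactly $k_i$ downward vertical separatrices, so there are $\sum k_i=d$ of them in total; with no vertical saddle connection each lands at a distinct point of $\bar I$. Admissibility is exactly what pins the endpoints of $I$ to separatrices and prevents an extra discontinuity coming from a first return to an endpoint, so the interior discontinuity set has $d$ points and $T$ has $d+1$ subintervals. This bypasses the rectangle-decomposition Euler count you flagged as delicate, at the cost of having to argue carefully that admissibility uses up the ``endpoint'' contribution without costing a separatrix in the interior. Either route is acceptable; both appear in the standard references (e.g.\ Viana's or Yoccoz's surveys).
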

In the case of the torus ($d=1$) the first return maps on admissible intervals are rotations
or 2-i.e.t.
We will denote by $\cC(d)$ the set of translation surfaces obtained by
gluing $2d$ triangles \footnote{In Teichm\"uller theory, $\cC(d)$ is just the
finite union of strata of translation surfaces with given dimension. For example one
has $\cC(2) = \cH(0)$, $\cC(3) = \cH(0,0)$, $\cC(4) = \cH(0,0,0) \cup \cH(2)$
and $\cC(5) = \cH(0,0,0,0) \cup \cH(2,0) \cup \cH(1,1)$.}.

\subsection{Lagrange constants, best approximations and Delaunay triangulations}
Now we explain how Lagrange constants of interval exchange transformation can be computed
from the $\Norm$-norm of holonomies of saddle connections.

Let $X$ be a translation surface. To a saddle connection can be associated a
vector in $\C$ that is called its
\emph{holonomy}. It corresponds to the displacement induced on the translation
structure while traveling along this segment. Given a translation surface $X$
we denote by $V(X) \subset \C$ the set of holonomies of saddle connection in
$X$.
\begin{theorem}[\cite{Vorobets}, \cite{HubertMarcheseUlcigrai}]
\label{thm:a}
Let $X$ be a translation surface. We define
\[
\a(X) := \liminf_{\stackrel{v\in V(X)}{\Im(v) \to \infty}} \frac{\Norm^{2}(v)}{\area(X)},
\]
where $\Norm(v) := \sqrt{|\Re(v)|\ |\Im(v)|}$, for vectors $v \in \C$.

Let $T$ be an interval exchange transformation that is the induced map of the
translation flow of $X$ on some admissible horizontal interval.
Then $\cE(T) = \a(X)$.
\end{theorem}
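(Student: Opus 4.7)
The plan is to use the zippered rectangles representation of $X$ over $I$, which realizes $X$ as a disjoint union (up to measure zero) of rectangles $R_1, \ldots, R_{d+1}$ with $R_i$ sitting above the $i$-th subinterval $I_i$ of $T$, of width $\lambda_i$ and height $h_i$ equal to the first return time of the vertical flow from $I_i$ back to $I$. In particular $\area(X) = \sum_i \lambda_i h_i$, so the average return time is $\bar h = \area(X)/|I|$. All singularities of $X$ lie on horizontal boundaries of these rectangles, and the singularities of $T^n$ on $I$ are precisely the vertical projections to $I$ of the singularities of $X$ met by the flow within the first $n$ returns. This dictionary lets us identify a saddle connection $\gamma$ with holonomy $v = x + iy$, $y > 0$, with a pair of singularities of $T^{n(\gamma)}$ at horizontal separation $|x|$, where $n(\gamma)$ is the number of times the vertical trajectory along $\gamma$ meets $I$.

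For the direction $\a(X) \geq \cE(T)$, take a saddle connection $\gamma$ with $y = \Im(v)$ large and set $n = n(\gamma)$. The two endpoints of $\gamma$ project onto $I$ as distinct singularities of $T^n$, hence $|x| \geq \epsilon_n(T)$. By the Birkhoff ergodic theorem applied to the return-time function on $I$ (combined with the bound $y \in [n h_{\min}, (n+1) h_{\max}]$ valid for every starting point), we have $y = n \bar h\,(1 + o(1))$ along any sequence of saddle connections with $y \to \infty$. Therefore
\[
\frac{\Norm^2(v)}{\area(X)} = \frac{|x|\,y}{\bar h\,|I|} \geq \frac{\epsilon_n(T)\,n}{|\lambda|}\,(1 + o(1)),
\]
and passing to the $\liminf$ yields $\a(X) \geq \cE(T)$.

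For the reverse direction $\a(X) \leq \cE(T)$, fix large $n$ and pick adjacent singularities $p_1, p_2$ of $T^n$ on $I$ realizing $|p_1 - p_2| = \epsilon_n(T)$. Flow the vertical strip $(p_1, p_2) \times [0, \infty)$ upward in $X$; at some time $t \leq n \bar h\,(1 + o(1))$ a surface singularity must appear on its left or right boundary, because at least one of $p_1, p_2$ descends from a surface singularity reached within the first $n$ returns and the Birkhoff relation once again controls the flow time in terms of $n$. The first such occurrence produces a saddle connection $\gamma$ with $|\Re(\gamma)| \leq \epsilon_n(T)$ and $\Im(\gamma) \leq n \bar h\,(1 + o(1))$, hence $\Norm^2(\gamma)/\area(X) \leq n\epsilon_n(T)/|\lambda|\,(1 + o(1))$; passing to the $\liminf$ gives the claim.

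The main obstacle is to rigorously establish the asymptotic $y \sim n \bar h$ along sequences realizing the $\liminf$ on either side. This is not a direct application of Birkhoff's theorem, since the endpoints of saddle connections are the non-generic points of the vertical flow. One must either invoke unique ergodicity of the vertical flow (which holds under the Keane condition), or sharpen the deterministic bracket $y \in [n h_{\min}, (n+1) h_{\max}]$ by a Rauzy--Veech-induction argument exploiting that the rectangles appearing in the orbit of a saddle connection are visited with the correct frequencies in the long run.
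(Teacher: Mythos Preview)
The paper does not prove this theorem; it is quoted from \cite{Vorobets} and \cite{HubertMarcheseUlcigrai} and used as a black box, so there is nothing in the paper to compare against. I assess your argument on its own merits.

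Your zippered-rectangle strategy is natural, and you correctly identify that the whole argument hinges on the asymptotic $y\sim n\bar h$. Your proposed fix, however, contains a genuine error: the parenthetical ``unique ergodicity of the vertical flow (which holds under the Keane condition)'' is false. The Keane condition is equivalent to minimality, and there exist minimal i.e.t.'s that are not uniquely ergodic (Keane himself constructed such examples). So you cannot simply invoke unique ergodicity. What actually saves the theorem in the non-uniquely-ergodic case is that both sides vanish: Boshernitzan's criterion (cf.\ the footnote in the present paper) gives $\limsup n\epsilon_n(T)=0$, hence $\cE(T)=0$; and Masur's criterion forces the geodesic $g_tX$ to leave every compact set of the moduli space, which produces saddle connections with $\Im(v)\to\infty$ and $\Norm^2(v)\to0$, so $\a(X)=0$. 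This case must be isolated before any averaging argument.

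In the uniquely ergodic case your heuristic can be made rigorous, but you have not done so: you need $y/n\to\bar h$ \emph{uniformly in the base point}, including along the singular half-orbits that carry the saddle-connection endpoints. Unique ergodicity gives uniform convergence of Birkhoff averages only for continuous observables, whereas the roof function $h$ is a step function discontinuous exactly at the singularities of $T$; a sandwiching of $h$ between continuous functions is required and you have not supplied it. The deterministic bracket $y\in[nh_{\min},(n{+}1)h_{\max}]$ only pins $\a(X)$ and $\cE(T)$ to within a factor $h_{\max}/h_{\min}$ of each other, not to equality. The reference \cite{HubertMarcheseUlcigrai} bypasses this return-time bookkeeping by recasting both $\a(X)$ and $\cE(T)$ as limits of a systole function along the Teichm\"uller geodesic $t\mapsto g_tX$ and matching them through renormalization.
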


In the above theorem, the minimum is taken over all saddle connections in
$V(X)$. We will show in Lemma~\ref{lem:best_approx_vs_delaunay} that it is enough to restrict the $\liminf$ to
a subset of edges. Then we show that this subset of edges is exactly
the set of edges of the Delaunay triangulations of some deformations
of $X$.

We first need to introduce "quadrants" of saddle connections
from~\cite{Marchese-Khinchin} and~\cite{DelecroixUlcigrai}. Let $X$ be a
translation surface. Given two segments with the same starting point they
have a well defined angle. If the starting point has a total angle
$2k\pi$ then the angle between the segments is between $0$ and $2k\pi$.
Let us consider the vertical half lines that start from the singularities
in direction $\sqrt{-1}$. Up to a change of orientation, a saddle connection can
always be made to start with an angle in $[-\pi/4,\pi/4]$ with respect
to one of these outgoing vertical separatrices. Let us fix a numbering
of these half lines from $1$ to $d$. We associate to each of them
a pair $V^i_\ell(X)$ and $V^i_r(X)$ of subset of saddle connections
that are the one with angle respectively in $[-\pi/4,0]$ and $[0,\pi/4]$
with the corresponding vertical.

A saddle connection is called a \emph{best approximation} if it is the diagonal
of an immersed rectangle in the surface whose boundary edges are horizontal and
vertical. Equivalently, $\gamma \in V^{i}_s(X)$ where $i \in \{1,2,\ldots,d\}$
and $s \in \{\ell,r\}$ is not a best approximation if there exists $\eta \in
V^{i}_s(X)$ (the same quadrant) so that $|\Re(\eta)| < |\Re(\gamma)|$ and
$|\Im(\eta)| < |\Im(\gamma)|$.

We will denote by $V_{ba}(X)$ the set of holonomies of best approximation on $X$. From its definition
it follows that
\[
\a(X) = \inf_{v \in V_{ba}(X)} \Norm(v) = \inf_{v \in V(X)} \Norm(v)
\]
where $\a(X)$ is defined in Theorem~\ref{thm:a}.

We now show that best approximations can be seen as the edges of some Delaunay
triangulations.
The group
$\SL(2,\R)$ acts on the set of (equivalence classes) of translation surfaces
through its linear action on $\R^2$. The subaction of the diagonal subgroup
$\displaystyle g_t = \begin{pmatrix}e^t & 0 \\ 0 & e^{-t}\end{pmatrix}$ is
called the \emph{Teichm\"uller flow}. Note that this action is also well
defined on points and segments (that is, given a pair $(X,p)$ (or $(X,\gamma)$)
made of a translation surface and a point, or a segment, and a matrix
$m$, the image of $p$ (or $\gamma$) in $m \cdot X$ is well defined).
As in the case of the plane, one can define the Delaunay triangulation of a
translation surface $X$ by considering maximal immersed squares. The following
two lemmas are elementary.
\begin{lemma}
Let $X$ be a translation surface with neither a horizontal nor a vertical saddle
connection. Then for all $t \in \R$, $g_t X$ has a well defined Delaunay
triangulation except for a discrete set of times $t_n$ for which some quadruple
of singularities are on the boundary of an immersed square (in which case
there is no uniqueness of Delaunay triangulation).
\end{lemma}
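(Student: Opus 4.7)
The plan is to establish two statements: that the Delaunay triangulation of $g_tX$ is well defined except at certain isolated times, and that these bad times form a discrete set.

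The first statement is essentially tautological from the definition of the $L^\infty$-Delaunay triangulation recalled in the paragraph preceding the lemma: ambiguity in the choice of triangulation can occur only when some maximal empty axis-aligned immersed square carries four or more singularities on its boundary, in which case the resulting quadrilateral admits two legitimate triangulations by its diagonals.

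For the second statement, I would parametrize each potentially bad time by its underlying combinatorial data. A bad time $t_c$ arises from a tuple of four singularities $(p_1,p_2,p_3,p_4)$ together with a choice of arcs realizing the four points on the boundary of a common immersed axis-aligned rectangle in $X$ with empty interior. If this rectangle has width $w$ and height $h$ in $X$, then its image in $g_tX$ is a square precisely at the unique time $t_c=\tfrac12\log(h/w)$. Hence the bad times are in bijection with this combinatorial data, and it suffices to show that in any compact interval $I=[t_0,t_1]\subset\R$ only finitely many such tuples give a bad time in $I$. For this finiteness step, I would first bound uniformly in $t\in I$ the side length $s(t)$ of maximal empty axis-aligned immersed squares in $g_tX$; call such a bound $S_0=S_0(X,I)$. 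Once $S_0$ is in place, each of the six pairwise saddle-connection holonomies attached to a bad tuple has $L^\infty$-norm at most $S_0$ in $g_{t_c}X$, hence at most $S_0\cdot e^{\max_{t\in I}|t|}$ in $X$ after pulling back by $g_{-t_c}$. By the well-known discreteness of the set $V(X)\subset\C$ of saddle-connection holonomies, only finitely many such holonomies exist, leaving only finitely many bad tuples and hence only finitely many bad times in $I$.

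The principal obstacle, and where I expect the bulk of the work to go, is the uniform upper bound $S_0$. The natural tool is Corollary~\ref{cor:area_triangle}: a maximal empty axis-aligned immersed square of side $s$ contains an inscribed admissible triangle which is one of only finitely many triangles in a Delaunay triangulation of $g_tX$, whose areas sum to the $g_t$-invariant quantity $\area(X)$. Combined with the two-sided bounds of Corollary~\ref{cor:area_triangle} relating triangle area to the $\Norm$-lengths of sides and a careful book-keeping of how a large empty square forces many short saddle connections nearby on the surface, this should yield the uniform bound. Care is needed because the square is immersed, not embedded: one must check that an immersed maximal empty axis-aligned square in a compact translation surface with finitely many singularities cannot have arbitrarily large side, a fact that should follow from the finiteness of the Delaunay triangulation on a compact surface together with area accounting, but whose rigorous verification is the main subtlety of the proof.
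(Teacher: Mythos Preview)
The paper does not actually prove this lemma; it merely introduces it with the phrase ``The following two lemmas are elementary'' and moves on. So there is no proof in the paper to compare against, and the question is simply whether your argument is correct.

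Your overall strategy is sound: the reduction to discreteness, the parametrization of bad times by combinatorial data (four singularities on the boundary of an empty immersed rectangle), the observation that each such configuration yields exactly one bad time $t_c=\tfrac12\log(h/w)$, and the reduction to a uniform bound $S_0$ on the side of maximal empty squares over a compact time interval are all correct steps.

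There is, however, a genuine gap in how you propose to obtain $S_0$. Corollary~\ref{cor:area_triangle} gives $\area(\Delta)\geq \tfrac{\sqrt5}{2}\,m$ with $m=\min_i \Norm^2(\text{side}_i)$, so combining it with $\area(\Delta)\leq\area(X)$ only bounds the $\Norm$-norm of the \emph{shortest} side of the inscribed admissible triangle, not the side $s$ of the ambient square. A thin admissible triangle inscribed in a huge square can simultaneously have tiny area and tiny $\Norm$-norms on all sides, so nothing here controls $s$. Your phrase ``a large empty square forces many short saddle connections nearby'' gestures in the right direction but is not what Corollary~\ref{cor:area_triangle} delivers.

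What actually gives $S_0$ is the following. On a compact interval $I$, the Euclidean systole $\ell_0:=\inf_{t\in I}\inf\{|v|:v\in V(g_tX)\}$ is strictly positive (this uses discreteness of $V(X)$ together with the hypothesis of no horizontal or vertical saddle connections). At a bad time the inscribed quadrilateral $Q$, with one vertex on each side of the square of side $s$, has all four sides of Euclidean length $\geq\ell_0$ (they are saddle connections) and is embedded in the surface (it is a union of two Delaunay triangles), so $\area(Q)\leq\area(X)$. An elementary shoelace computation then shows $\area(Q)\geq c\,s\,\ell_0$ for a universal $c>0$, whence $s\leq \area(X)/(c\,\ell_0)=:S_0$.

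Alternatively, and this is probably why the authors deem the lemma elementary, one can bypass $S_0$ altogether by a local stability argument. At any $t_0$ the Delaunay triangulation of $g_{t_0}X$ has $2d$ triangles, each inscribed in a maximal square carrying either three or four singularities on its boundary. The three-point condition is open in $t$; for each four-point (ambiguous) square present at $t_0$, one diagonal becomes the Delaunay edge for $t>t_0$ and the other for $t<t_0$. Since there are only finitely many triangles, the triangulation is combinatorially constant on a punctured neighbourhood of $t_0$, which gives isolation of bad times directly without any estimate on square sizes.
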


\begin{lemma} \label{lem:best_approx_vs_delaunay}
Let $X$ be a translation surface. For a saddle connection $\gamma$ in $X$ the following are equivalent:
\begin{enumerate}
\item $\gamma$ is a best approximation,
\item there exists $t \in \R$ such that the $L^\infty$-Delaunay triangulation of $g_t X$ contains $g_t \gamma$ as an edge.
\end{enumerate}
\end{lemma}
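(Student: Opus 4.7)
The plan is to exploit two structural facts: the diagonal (Teichm\"uller) action $g_t$ sends axis-aligned rectangles to axis-aligned rectangles and preserves emptiness (it is a homeomorphism of $X$ whose linear part commutes with the axis structure), and an edge $[p,q]$ belongs to an $L^\infty$-Delaunay triangulation of a translation surface if and only if there exists an empty immersed axis-aligned square with $p$ and $q$ on its boundary. This latter \emph{empty-square} characterization is the surface analogue of the planar definition used in Section~\ref{sec:pack}.

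For the direction (2) $\Rightarrow$ (1) I would argue as follows. Suppose $g_t\gamma$ is an edge of the $L^\infty$-Delaunay triangulation of $g_tX$, so there is an empty immersed axis-aligned square $S \subset g_tX$ with the two endpoints of $g_t\gamma$ on $\partial S$. Pushing $S$ back by $g_{-t}$ gives an empty immersed axis-aligned rectangle $R = g_{-t}S \subset X$ whose boundary contains the endpoints of $\gamma$. The minimal axis-aligned rectangle $R_\gamma$ having $\gamma$ as a diagonal is contained in $R$ by axis-alignment and convexity, so $R_\gamma$ is also empty and immersed. Hence $\gamma$ is a best approximation.

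For (1) $\Rightarrow$ (2) I would start from the empty immersed axis-aligned rectangle $R \subset X$ with $\gamma$ as a diagonal, guaranteed by the definition of a best approximation. Set $a = |\Re(\gamma)|$ and $b = |\Im(\gamma)|$, both strictly positive because $X$ has no horizontal or vertical saddle connections. Choosing $t \in \R$ with $e^{2t} = b/a$ rescales $R$ to an empty immersed axis-aligned square $g_tR$ of side $\sqrt{ab}$, with the endpoints of $g_t\gamma$ at opposite corners. This square is an empty witness for $g_t\gamma$ being an $L^\infty$-Delaunay edge in $g_tX$.

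The main obstacle is to justify the empty-square characterization of Delaunay edges in the compact translation-surface setting. Concretely, given an empty axis-aligned square whose boundary contains only the two endpoints, one has to upgrade it to a witness for an actual Delaunay triangle by producing a third singularity on the boundary. I would do this by continuously deforming the square within the one-parameter family of axis-aligned squares containing $p$ and $q$ on the boundary (growing the side length while translating so that both endpoints remain on $\partial S$); since the singularity set is finite and $X$ is compact, a third singularity must eventually touch the boundary, giving a genuine Delaunay triangle containing $g_t\gamma$. The exceptional times in the preceding lemma, when four singularities coexist on a square, correspond to non-unique Delaunay triangulations, but the edge $g_t\gamma$ still appears in each of them, so the conclusion is unaffected.
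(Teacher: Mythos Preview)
Your proof is correct and follows the same idea as the paper: the paper's entire argument is the single remark that ``any rectangle immersed in $X$ can be turned into a square using the $g_t$ action,'' which is exactly your core observation in both directions. Your write-up is more careful than the paper's, in particular in spelling out the containment $R_\gamma \subset g_{-t}S$ for $(2)\Rightarrow(1)$ and in addressing the passage from an empty square with two marked corners to a genuine Delaunay triangle for $(1)\Rightarrow(2)$; the paper treats these points as implicit in the definition of the $L^\infty$-Delaunay triangulation via maximal immersed squares.
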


\begin{proof}
We just need to remark that any rectangle immersed in $X$ can be turned into a square using the $g_t$ action.
\end{proof}

\subsection{At the bottom of the Lagrange spectrum are golden surfaces}

Let us introduce the surfaces that will be shown to be exactly the ones that
minimize the quantity $\a(X)$ of Theorem~\ref{thm:a}. Let $\Lambda$ be the lattice
$\Lambda = \Z (-1,1) \oplus \Z (\phi-1, \phi)$ where $\phi = (\sqrt{5}+1)/2$ is
the golden ratio. We call any lattice in the family $g_t \Lambda$ a
\emph{golden lattice} and the associated quotients $\C / g_t \Lambda$ a
\emph{golden torus}. We also call any parallelogram generated by a basis of
$\Lambda$ a \emph{golden parallelogram}. A \emph{golden surface} in $\cC(d)$ is
a translation surface obtained by gluing together $d$ identical golden
parallelograms (each obtained by gluing $2$ triangles). In geometric terms, such
surface is a ramified covering of degree $d$ of a golden torus.

In this section we prove the following result
\begin{theorem} \label{thm:lagrange_surface}
There exists $\epsilon_0$ such that the following holds.
If $X$ is a translation surface in $\cC(d)$ such that
\[
\inf_{v \in V(X)} \frac{\Norm^{2}(v)}{\area(X)} \geq \frac{1}{d\sqrt{5}} - \frac{\epsilon_0}{d^2}
\]
then $X$ is a golden surface.
\end{theorem}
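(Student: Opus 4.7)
The plan is to extract information from the $L^\infty$-Delaunay triangulation of $g_t X$ at a generic time $t$. Since $X \in \cC(d)$ is cut into $2d$ euclidean triangles, its Delaunay triangulation at generic $t$ is also made of exactly $2d$ admissible triangles $\Delta_1, \ldots, \Delta_{2d}$. By Lemma~\ref{lem:best_approx_vs_delaunay}, every edge of this triangulation is the $g_t$-image of a best approximation, so its holonomy $v$ satisfies
\[
\Norm^2(v) \;\geq\; \inf_{w \in V(X)} \Norm^2(w) \;\geq\; \left(\frac{1}{d\sqrt{5}} - \frac{\epsilon_0}{d^2}\right)\area(X).
\]
Note that $\Norm^2$ and area are invariant under $g_t$, so this bound transfers to the edges of the triangulation of $g_t X$.

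I would then squeeze the triangle areas from both sides. Corollary~\ref{cor:area_triangle} gives $\area(\Delta_i) \geq \frac{\sqrt{5}}{2}m_i$, where $m_i$ is the minimum of $\Norm^2$ over the three sides of $\Delta_i$, hence $\area(\Delta_i) \geq \bigl(\tfrac{1}{2d} - \tfrac{\sqrt{5}\,\epsilon_0}{2d^2}\bigr)\area(X)$. Since the $2d$ triangles partition $X$, complementing the sum forces the reverse bound $\area(\Delta_i) \leq \bigl(\tfrac{1}{2d} + O(\epsilon_0/d)\bigr)\area(X)$. Combining this with the lower bound $m_i \geq \bigl(\tfrac{1}{d\sqrt{5}} - \tfrac{\epsilon_0}{d^2}\bigr)\area(X)$ yields
\[
\area(\Delta_i) \;\leq\; \left(\frac{\sqrt{5}}{2} + C\,\epsilon_0\right)m_i
\]
for an absolute constant $C$. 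Choosing $\epsilon_0$ small enough that $C\,\epsilon_0 < \sqrt{2} - \tfrac{\sqrt{5}}{2}$, Corollary~\ref{cor:distorsion_from_area} then gives $M_i \leq (1 + C'\epsilon_0)\,m_i$ for each $i$. Thus every edge of the Delaunay triangulation of $g_t X$ has $\Norm^2$ within a $(1 + O(\epsilon_0))$-factor of $\tfrac{1}{d\sqrt{5}}\area(X)$, and every triangle is close to the unique (up to $g_t$) admissible triangle with $\alpha=\beta=\gamma$, whose parameters solve $x(y+1)=y(x+1)=1$, giving $x = y = 1/\phi$. This is precisely a golden half-parallelogram.

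The main obstacle is promoting these approximate equalities to exact ones, so as to conclude $X$ is honestly a golden surface rather than a perturbation. The natural strategy is to apply the above near-rigidity uniformly along the $g_t$-orbit: if $X$ were not golden, then the Delaunay triangulations of $g_t X$ would vary nontrivially in $t$, and at some time $t_0$ one would encounter an edge flip between two near-golden triangles whose combinatorics is not compatible with a golden lattice. One can then hope to exhibit, after such a flip, a saddle connection whose $\Norm^2$ falls strictly below $\tfrac{1}{d\sqrt{5}}\area(X)$ by a definite amount controlled by the deviation of the triangulation from the rigid golden model. Turning this qualitative picture into the quantitative gap $\epsilon_0/d^2$ is where the real work lies; it is natural to induct on $d$, using the $d=1$ case of Hurwitz (which provides the gap $2\sqrt{2}-\sqrt{5}$ between golden and non-golden tori) as the base, and to exploit the equality case of Theorem~\ref{thm:delta_estimate_in_convex} (which pins the extremizing configurations to subsets of a golden lattice) as the rigidity input in the inductive step.
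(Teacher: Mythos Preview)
Your first phase is correct and matches the paper: the area pinching on the $2d$ Delaunay triangles, together with Corollaries~\ref{cor:area_triangle} and~\ref{cor:distorsion_from_area}, gives that every best approximation has $\Norm^2$ within a multiplicative factor $1+O(d\epsilon)$ of $1/\sqrt{5}$ (after normalizing $\area(X)=d$). This is exactly Lemma~\ref{lem:low_to_up_bound}.

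The gap is in your rigidity step. Your sketch --- track edge flips along the $g_t$-orbit, hope a non-golden flip produces a short saddle connection, and induct on $d$ using Hurwitz and the equality case of Theorem~\ref{thm:delta_estimate_in_convex} --- is not what the paper does, and as stated it is not a proof: you yourself flag that ``the real work lies'' there. The equality case of Theorem~\ref{thm:delta_estimate_in_convex} concerns finite planar point sets, not the global combinatorics of a triangulated surface, and it is not clear how an induction on $d$ would be organized (a surface in $\cC(d)$ does not naturally fiber over one in $\cC(d-1)$).

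The paper's mechanism for exact rigidity is a renormalization/Perron--Frobenius argument, and it requires two further lemmas you do not mention. First, a local inverse-function-theorem computation (Lemma~\ref{lem:parallelogram}) pins down that any admissible quadrilateral with all six relevant $\Norm^2$ values near $1/\sqrt{5}$ is close, in a scale-invariant distance, to a golden parallelogram and compatible with the map $\tau(x,y)=(-x/\phi,\phi y)$. Second, a separation lemma (Lemma~\ref{lem:ba_far_appart}) shows distinct best approximations in a fixed quadrant are uniformly far apart in that distance. Together these yield a \emph{unique} admissible quadrangulation $Q_0$ of $X$ close to golden (claim~1). One then performs left/right diagonal changes (Ferenczi--Zamboni moves) to produce a bi-infinite sequence $(Q_n)$ of such quadrangulations; since there are only finitely many combinatorial types and the move is deterministic on types, the sequence is periodic. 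The period-$p$ return acts on the $2d$ holonomy vectors by a nonnegative integer matrix $A$, and the golden configuration (all right-slanted sides equal, all left-slanted sides equal to their $\tau$-image) is visibly an eigenvector. Perron--Frobenius uniqueness then forces the actual holonomies to coincide with the golden ones exactly. This fixed-point argument is the missing idea.
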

Together with Theorem~\ref{thm:a}, Theorem~\ref{thm:lagrange_surface} implies~Theorem~\ref{thm:lagrange}.
In order to simplify notation in the proof, we will from now on always deal with surfaces such that $\area(X) = d$. That
way the mean area of a triangle is $1/2$ independently of $d$.
\begin{lemma} \label{lem:low_to_up_bound}
There exist constants $\epsilon_1 > 0 $ and $C_1 > 1$ such that the following holds.
For any area $d$ translation surface $X$ in $\cC(d)$ such that for some $\epsilon < \epsilon_1 / d$
we have
\[
\inf_{v \in V_{ba}(X)} \Norm^{2}(v) \geq \frac{1}{\sqrt{5}} - \epsilon.
\]
Then
\[
\sup_{v \in V_{ba}(X)} \Norm^{2}(v) \leq \frac{1}{\sqrt{5}} + C_1 d \epsilon.
\]
\end{lemma}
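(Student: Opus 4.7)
The plan is to deduce the upper bound on $V_{ba}(X)$ by working with the $L^\infty$-Delaunay triangulation of $g_t X$ for suitable $t$, exploiting the fact that $\Norm$ is invariant under the Teichm\"uller flow: indeed $\Norm(g_t v) = \sqrt{|e^t x|\cdot|e^{-t}y|} = \Norm(v)$, so the hypothesis $\inf_{v\in V_{ba}(X)} \Norm^{2}(v) \geq \tfrac{1}{\sqrt{5}} - \epsilon$ passes to every $g_t X$. By Lemma~\ref{lem:best_approx_vs_delaunay}, any best approximation of $X$ is an edge of the $L^\infty$-Delaunay triangulation of some $g_{t_0} X$, so it suffices to produce a uniform upper bound on $\Norm^{2}$ of \emph{every} edge of the Delaunay triangulation of a generic $g_t X$.

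Fix such a $t$ and consider the Delaunay triangulation of $g_t X$. By Lemma~\ref{lem:Linfinity_Delaunay_properties}(1) applied in local charts all triangles are admissible, and the Euler characteristic computation on a surface in $\cC(d)$ (namely $V - E + F = n - d$ with $3F = 2E$ and $V = n$) forces the triangulation to consist of exactly $2d$ triangles $\Delta_1,\ldots,\Delta_{2d}$. Denote by $m_i$ and $M_i$ the minimum and maximum of $\Norm^{2}$ on the sides of $\Delta_i$. The hypothesis gives $m_i \geq \tfrac{1}{\sqrt{5}} - \epsilon$, so by Corollary~\ref{cor:area_triangle},
\[
\area(\Delta_i) \geq \frac{\sqrt{5}}{2}\, m_i \geq \frac{1}{2} - \frac{\sqrt{5}}{2}\, \epsilon.
\]
Since $\sum_i \area(\Delta_i) = \area(X) = d$, subtracting this lower bound on the other $2d-1$ triangles yields
\[
\area(\Delta_i) \leq \frac{1}{2} + (2d-1)\frac{\sqrt{5}}{2}\,\epsilon \leq \frac{1}{2} + d\sqrt{5}\,\epsilon
\qquad (1 \leq i \leq 2d).
\]

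Combining the two area estimates, $\area(\Delta_i)/m_i \leq \tfrac{\sqrt{5}}{2} + \tfrac{d\sqrt{5}\,\epsilon}{1/\sqrt{5} - \epsilon}$, so for $\epsilon < \epsilon_1/d$ with $\epsilon_1$ chosen small enough the excess $\epsilon'_i := \area(\Delta_i)/m_i - \tfrac{\sqrt{5}}{2}$ is bounded by an absolute constant times $d\epsilon$ and lies below the threshold $\sqrt{2} - \tfrac{\sqrt{5}}{2}$ required by Corollary~\ref{cor:distorsion_from_area}. That corollary then yields $M_i \leq (1 + C' d\epsilon)\, m_i$ for some absolute $C'$. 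On the other hand, $m_i \leq \tfrac{2}{\sqrt{5}}\area(\Delta_i) \leq \tfrac{1}{\sqrt{5}} + 2 d\epsilon$, so
\[
M_i \leq (1 + C'd\epsilon)\!\left(\tfrac{1}{\sqrt{5}} + 2 d\epsilon\right) \leq \tfrac{1}{\sqrt{5}} + C_1 d\epsilon,
\]
after absorbing the $O((d\epsilon)^2)$ term by shrinking $\epsilon_1$ once more. Since every best approximation of $X$ arises as $M_i$ for an appropriate $t$, the required bound $\sup_{v\in V_{ba}(X)}\Norm^{2}(v) \leq \tfrac{1}{\sqrt{5}} + C_1 d\epsilon$ follows.

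The only delicate point is the calibration of $\epsilon_1$: the excess $\epsilon'_i$ fed into Corollary~\ref{cor:distorsion_from_area} is proportional to $d\epsilon$, which is precisely why the hypothesis must be scaled as $\epsilon < \epsilon_1/d$. The discrete set of $t$ where the Delaunay triangulation is ambiguous causes no difficulty: any choice of diagonals in the offending maximal squares still produces $2d$ admissible triangles, and the saddle connection of interest can be declared to be one of the chosen diagonals.
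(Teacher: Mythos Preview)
Your proof is correct and follows essentially the same path as the paper's: both use that the $L^\infty$-Delaunay triangulation of (a $g_t$-deformation of) $X$ has exactly $2d$ admissible triangles, combine the lower area bound from Corollary~\ref{cor:area_triangle} with the total-area constraint to get an upper bound on each $\area(\Delta_i)$, and then feed the resulting small excess into Corollary~\ref{cor:distorsion_from_area}. You are in fact more explicit than the paper on two points: you spell out the $g_t$-invariance of $\Norm$ (so the hypothesis transfers to every $g_t X$), and you give a clean separate upper bound $m_i \leq \tfrac{2}{\sqrt{5}}\area(\Delta_i)$ before multiplying by $(1+C'd\epsilon)$, whereas the paper's final displayed inequality invokes Corollary~\ref{cor:distorsion_from_area} somewhat loosely.
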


\begin{proof}
Let us consider a translation surface $(X,\omega)$ of area $d$ and let $\epsilon < \epsilon_1/d = \frac{2\sqrt{2} - \sqrt{5}}{20d} \simeq \frac{0.03}{d}$.
Assume that 
\[
\inf_{v \in V_{ba}(X)} \Norm^{2}(v) \geq \frac{1}{\sqrt{5}} - \epsilon.
\]
As a consequence of Corollary~\ref{cor:area_triangle}, we have for any translation surface
\begin{equation} \label{eq:bound_triangle}
\inf \{\area(\Delta): \text{$\Delta$ admissible triangle in $X$}\}
\geq
\frac{\sqrt{5}}{2} \inf_{v \in V_{ba}(X)} \Norm^{2}(v)
\geq
\frac{\sqrt{5}}{2} \left( \frac{1}{\sqrt{5}} - \epsilon \right).
\end{equation}
Let us consider the $L^\infty$-Delaunay triangulation $T$ of the surface $X$
and let $\Delta_0$ be one triangle in $T$. Using the fact that
the sum of the areas of the $2d$ triangles from $T$ is $\area(X) = d$ we have that
\[
\area(\Delta_0) \leq d - (2d-1)\ \min_{\Delta \in T} \area(\Delta).
\]
Now, using~\eqref{eq:bound_triangle} we obtain that
\begin{align*}
\area(\Delta_0)
& \leq d - \left(2d-1\right) \frac{\sqrt{5}}{2} \left(\frac{1}{\sqrt{5}} -\epsilon\right) \\
& \leq \left(\frac{1}{\sqrt{5}} - \epsilon \right)
     \left( \frac{\sqrt{5}d}{1 - \sqrt{5} \epsilon} - d\sqrt{5} + \frac{\sqrt{5}}{2}\right) \\
& \leq \left( \frac{1}{\sqrt{5}} - \epsilon \right)
      \left( \sqrt{5} d (1 + 2\sqrt{5} \epsilon) - d \sqrt{5} + \frac{\sqrt{5}}{2} \right) \\
& \leq \left( \frac{1}{\sqrt{5}} - \epsilon \right)
       \left( \frac{\sqrt{5}}{2} + 10 d \epsilon \right)
  \leq \left( \frac{\sqrt{5}}{2} + 10 d \epsilon \right) \inf_{v \in V_{ba}(X)} \Norm^{2}(v).
\end{align*}
Here we used the inequality $1/(1-\sqrt{5}\epsilon) \leq 1 + 2 \sqrt{5} \epsilon$ which is valid since $\sqrt{5}\epsilon < 1/2$. From our assumption, $10d\epsilon < \sqrt{2} - \sqrt{5}/2$,  and we can apply Lemma~\ref{cor:distorsion_from_area} and get that
\[
\sup_{v \in V_{ba}(X)} \Norm^{2}(v) \leq \left(1 + \frac{\sqrt{5}+1}{2} 10 d \epsilon \right) \left( \frac{1}{\sqrt{5}} - \epsilon \right)
     \leq \frac{1}{\sqrt{5}} + C_1 d \epsilon
\]
where $C_1 = 10 ( 2\sqrt{2} + \sqrt{5}) + 1/2 \leq 52$.
\end{proof}

Let us define the following distance between vectors
\[
\dist((x_1,y_1),(x_2,y_2)) =
\left\{\begin{array}{ll}
1 &  \text{if $\sgn(x_1) \not= \sgn(x_2)$ or $\sgn(y_1) \not= \sgn(y_2)$}\\
\max\left( \left|\log\left(\frac{x_1}{x_2}\right)\right|, \left|\log\left(\frac{y_1}{y_2}\right)\right|\right) & \text{otherwise}
\end{array} \right.
\]
Note that $\dist$ is invariant under the action of diagonal invertible matrices: in other words, for any nonzero real numbers $\alpha$ and $\beta$ we have
\[
\dist((\alpha x_1, \beta y_1), (\alpha x_2, \beta y_2)) = \dist((x_1,y_1),(x_2,y_2)).
\]
Let also $\tau$ be the linear map defined by
\[
\tau(x,y) = (-x/\phi, \phi y), \qquad \text{where \ $\phi = \frac{1+\sqrt{5}}{2}$}.
\]
Note that the only lattices satisfying $\tau(\Lambda) = \Lambda$ are the golden lattices.

\begin{lemma} \label{lem:parallelogram}
There exist constants $\epsilon_2 > 0$ and $C_2 > 1$ such that for any $\epsilon < \epsilon_2$
and any $\zeta_\ell = (x_\ell, y_\ell), \zeta_r = (x_r, y_r), \zeta = (x,y) \in \C$, if the conditions
\begin{itemize}
\item $x_\ell < 0$ and $x_r > 0$,
\item $0 < y_\ell < y_r < y$,
\item all of $\Norm^2(\zeta_\ell)$, $\Norm^2(\zeta_r)$, $\Norm^2(\zeta)$, $\Norm^2(\zeta_\ell - \zeta_r)$, $\Norm^2(\zeta - \zeta_\ell)$ and $\Norm^2(\zeta - \zeta_r)$
are in between $1/\sqrt{5} - \epsilon$ and $1/\sqrt{5} + \epsilon$,
\end{itemize}
hold, then all of $\dist(\zeta_r,\, \zeta-\zeta_\ell)$, $\dist(\zeta_\ell,\, \zeta-\zeta_r)$, $\dist(\tau(\zeta_\ell), \zeta_r)$, $\dist(\tau(\zeta_r),\, \zeta-\zeta_r)$, $\dist(\zeta_\ell, \tau(\zeta-\zeta_\ell))$ are smaller than $C_2 \epsilon$.
\end{lemma}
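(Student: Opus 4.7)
The strategy is to exploit a rigidity property of admissible triangles whose three side-$\Norm$-values are all close to $1/\sqrt 5$: such a triangle is forced, up to the diagonal $g_t$-action and a homothety, to be a small perturbation of the unique golden triangle whose sides all have $\Norm^2=1$. Applying this rigidity to each of the four sub-triangles $(0,\zeta_\ell,\zeta_r)$, $(\zeta_\ell,\zeta_r,\zeta)$, $(0,\zeta_\ell,\zeta)$, and $(0,\zeta_r,\zeta)$ cut out of the quadrilateral $\{0,\zeta_\ell,\zeta_r,\zeta\}$, and then combining the resulting approximate identities, would yield all five claimed $\dist$-bounds.

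First I would check that each of the four sub-triangles is admissible. This is a case analysis based on the sign hypotheses $x_\ell<0$, $x_r>0$ and the ordering $0<y_\ell<y_r<y$, which together constrain where each vertex lies on the minimum bounding rectangle and rule out two vertices being on the same side. Each sub-triangle then has its three sides among the six vectors whose $\Norm^2$-values are by hypothesis within $\epsilon$ of $1/\sqrt 5$.

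The key technical ingredient is a perturbation analysis of the normalized golden triangle. After normalizing a given admissible triangle via $g_t$ and a scalar so that the side $c$ (opposite the corner of the minimum rectangle) equals $(1,-1)$, the third vertex is $(1+y,x)$ with squared $\Norm$-values $\alpha=x(y+1)$, $\beta=y(x+1)$, $\gamma=1$, as in the proof of Lemma~\ref{lem:formula_area_triangle}. If all three of these lie in $[1-\epsilon',1+\epsilon']$ with $\epsilon'=O(\epsilon)$, setting $s=x+y$ and $d=x-y$ I get $d=\alpha-\beta=O(\epsilon)$ from subtraction, and $s^2+2s-4=O(\epsilon)$ from substituting into the sum $2xy+s=\alpha+\beta$. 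This gives $s=2(\phi-1)+O(\epsilon)$ and hence $x,y=\phi-1+O(\epsilon)$. Since $\dist$ is invariant under the diagonal action, this perturbative estimate immediately translates into $\dist$-bounds of order $\epsilon$ between the actual side vectors of the triangle and their canonical golden counterparts, and in particular between specific pairs of sides related by the symmetry $\tau$ of the golden triangle.

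Combining the rigidity statements for the four sub-triangles then yields the five claims. Claims 1 and 2 come from the rigidity applied to $(0,\zeta_\ell,\zeta)$ and $(0,\zeta_r,\zeta)$, whose third vertices in the normalized coordinates pin down $\zeta$ and force $\zeta\approx\zeta_\ell+\zeta_r$ up to $O(\epsilon)$; the three claims involving $\tau$ follow from the rigidity of $(0,\zeta_\ell,\zeta_r)$ and $(\zeta_\ell,\zeta_r,\zeta)$ together with the fact that $\tau$ is precisely the linear map that (together with the $g_t$-gauge) identifies pairs of sides of the golden triangle. The main obstacle I expect is the combinatorial bookkeeping: tracking which vertex of each normalized triangle corresponds to which of the six original vectors, with the correct signs and orientations, and then combining the $O(\epsilon)$ errors coming from four separate normalizations without the constants compounding uncontrollably, so that a single constant $C_2$ suffices for all five bounds.
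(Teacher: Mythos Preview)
Your approach is sound but takes a different route from the paper's. The paper normalizes once (using $g_t$ to set $x_\ell=-1$), then applies the inverse function theorem to the two maps
\[
(y_\ell,x_r,y_r)\mapsto\bigl(\Norm^2(\zeta_\ell),\Norm^2(\zeta_r),\Norm^2(\zeta_r-\zeta_\ell)\bigr),
\qquad
(y_\ell,x,y)\mapsto\bigl(\Norm^2(\zeta_\ell),\Norm^2(\zeta),\Norm^2(\zeta-\zeta_\ell)\bigr),
\]
checking that both have invertible derivative at the golden point and that this point is the unique solution of the system under the given sign constraints; the five $\dist$-estimates then drop out of Lipschitz dependence. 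You instead renormalize each sub-triangle separately to the canonical picture of Lemma~\ref{lem:formula_area_triangle} and run an explicit quadratic computation. Your version has the advantage of actually proving the uniqueness (positivity of $s=x+y$ singles out the root $-1+\sqrt5$ of $s^2+2s-4$) that the paper only asserts; the paper's version buys freedom from the cross-triangle bookkeeping you flag at the end, since a single chart and a single IFT call handle everything.

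One point in your outline needs repair. The admissibility of the four sub-triangles does \emph{not} follow from the sign hypotheses alone: those hypotheses say nothing about the sign of $x=\Re(\zeta)$ or its position relative to $x_\ell$ and $x_r$, so for instance if $x>x_r$ then $\zeta_r$ lies in the interior of the bounding rectangle of $(0,\zeta_r,\zeta)$ and that triangle is non-admissible. What actually forces admissibility is the $\Norm^2$ hypothesis: in a non-admissible triangle all three side-slopes have the same sign, and then one side satisfies $\Norm^2(\text{long})\ge\Norm^2(\text{side}_1)+\Norm^2(\text{side}_2)$, which is incompatible with all three values lying near $1/\sqrt5$. With this observation added your case analysis goes through, but note that even among the admissible configurations the identity of the corner vertex (hence which side plays the role of $c$) of a given sub-triangle can still depend on where $x$ sits, so the bookkeeping you anticipate is genuine.
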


\begin{proof}
Up to rescaling we can assume that $x_\ell = -1$. Once $x_\ell$ is fixed, the maps
\[
(y_\ell, x_r, y_r) \mapsto (\Norm^2(\zeta_\ell), \Norm^2(\zeta_r), \Norm^2(\zeta_r - \zeta_\ell))
\quad \text{and} \quad
(y_\ell, x, y) \mapsto (\Norm^2(\zeta_\ell), \Norm^2(\zeta), \Norm^2(\zeta-\zeta_\ell))
\]
have invertible derivative at the point
\[
y_\ell = \frac{1}{\sqrt{5}},\ x_r = \phi - 1,\ y_r = \frac{\phi}{\sqrt{5}},\ x = \phi - 2,\ y = \frac{\phi^2}{\sqrt{5}}.
\]
Moreover, the above point is the unique solution of $\Norm^2(\zeta_\ell) = \Norm^2(\zeta_r) = \Norm^2(\zeta) = \Norm^2(\zeta-\zeta_\ell) = \Norm^2(\zeta-\zeta_r) = \frac{1}{\sqrt{5}}$. One concludes using the inverse function theorem.
\end{proof}

Recall that $V_{ba}(X)$ denote the holonomies of best approximations. Given a
quadrant $i$ in $X$ we denote by $V_{ba}^{(i)}(X)$ the set of holonomies of
saddle connections restricted to the $i$-th quadrant.
\begin{lemma} \label{lem:ba_far_appart}
There exist constants $\epsilon_3 > 0$ and $C_3 > 0$ such that if $X$ is any area $d$ surface in $\cC(d)$ such that
\[
\forall v \in V_{ba}(X),\quad \frac{1}{\sqrt{5}} - \epsilon_3 < \Norm^2(v) < \frac{1}{\sqrt{5}} + \epsilon_3
\]
then in any quadrant $i$ of $X$ we have
\[
\forall u,v \in V^{(i)}_{ba}(X), u \not= v, \qquad \dist(u,v) > C_3.
\]
\end{lemma}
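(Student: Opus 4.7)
The plan is to argue by contradiction: assume that there exist distinct $u, v \in V^{(i)}_{ba}(X)$ with $\dist(u, v) \leq C_3$, and produce a best approximation with $\Norm^{2}$ strictly below $\tfrac{1}{\sqrt{5}} - \epsilon_3$, in violation of the hypothesis. The constants $\epsilon_3$ and $C_3$ are chosen at the very end to make the resulting numerical bound contradictory.

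First I would normalize using the diagonal flow. Since $g_t$ preserves $V_{ba}(X)$, the values of $\Norm^{2}$, and the metric $\dist$, applying $g_{t_0}$ for a suitable $t_0$ turns $u$ into a square diagonal $u = (s,s)$ with $s^2 = \Norm^{2}(u) \in [\tfrac{1}{\sqrt{5}} - \epsilon_3,\ \tfrac{1}{\sqrt{5}} + \epsilon_3]$. Writing $v = (s e^a, s e^b)$, the invariance of $\dist$ under $g_t$ gives $\max(|a|,|b|) \leq C_3$, with $(a,b) \neq (0,0)$. If $a,b$ shared a sign then one of $u,v$ would strictly dominate the other in both coordinates in absolute value, preventing it from being a best approximation; so $a$ and $b$ have opposite signs, and after possibly swapping $u$ and $v$ I may take $a < 0 < b$. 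A direct calculation then gives
\[
\Norm^{2}(v - u) \;=\; s^{2}(1-e^{a})(e^{b}-1) \;\leq\; s^{2}\, |a|\, b\, e^{C_3} \;\leq\; s^{2}\, C_3^{2}\, e^{C_3}.
\]

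The geometric heart of the argument is to extract from $v-u$ an actual best approximation of comparable $\Norm^{2}$. The points $p + u$ and $p + v$ are singularities of the normalized surface, joined in the universal cover by the straight segment from the lift $u$ to the lift $v$. The corresponding surface-level geodesic from $p+u$ either reaches $p+v$ (giving a saddle connection with holonomy $v-u$) or first encounters another singularity (giving a saddle connection with holonomy $\lambda (v-u)$ for some $\lambda\in(0,1)$). Either way I obtain a saddle connection $\gamma$ with $\Norm^{2}(\gamma) \leq \Norm^{2}(v-u)$. If $\gamma$ is not itself a best approximation, then the rectangle of which it is the diagonal fails to be immersed, so contains a singularity $q$ strictly in its interior, and the segment from $p+u$ to $q$ is a saddle connection with both $|\Re|$ and $|\Im|$ strictly smaller than those of $\gamma$. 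Discreteness of the set of saddle-connection holonomies inside any bounded region forces this shrinking process to terminate at a genuine best approximation $\gamma^{*}\in V_{ba}(X)$ with $\Norm^{2}(\gamma^{*}) \leq \Norm^{2}(v-u)$.

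Applying the hypothesis to $\gamma^{*}$ now yields
\[
\tfrac{1}{\sqrt{5}} - \epsilon_3 \;\leq\; \Norm^{2}(\gamma^{*}) \;\leq\; \Norm^{2}(v-u) \;\leq\; \Bigl(\tfrac{1}{\sqrt{5}} + \epsilon_3\Bigr)\, C_3^{2}\, e^{C_3},
\]
which is contradictory as soon as, say, $\epsilon_3 \leq \tfrac{1}{3\sqrt{5}}$ and $C_3^{2} e^{C_3} < \tfrac{1}{4}$ (e.g. $C_3 = \tfrac{1}{2}$ works). The delicate step I expect to require the most care is the geometric one: pinning down the unambiguous tangent direction at the singularity $p+u$ that launches the correct straight segment toward the lift of $p+v$ in the universal cover, and formalizing the iterative shrinking inside non-immersed rectangles down to a bona fide best approximation. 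The rest is an elementary computation in the $(a,b)$-coordinates after the $g_t$-normalization.
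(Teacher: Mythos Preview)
Your approach is essentially correct but genuinely different from the paper's. The paper gives a two--sentence proof: once $\epsilon_3$ is small enough for Lemma~\ref{lem:parallelogram} to apply, every local picture (a best approximation $\zeta_r$ together with its neighbouring $\zeta_\ell$ and the top $\zeta$) is $C_2\epsilon$--close to a golden configuration; in particular consecutive best approximations in a fixed quadrant are close to being related by $\tau^2$, hence sit at $\dist$ roughly $2\log\phi$ from one another, and non--consecutive ones are even farther. All the work has already been done in the parallelogram lemma.

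Your route bypasses Lemma~\ref{lem:parallelogram} entirely: from two nearby best approximations $u,v$ in the same quadrant you manufacture a short saddle connection and then contract it to a best approximation of small $\Norm^2$, contradicting the hypothesis. This is more self--contained. Two remarks. First, the geometric step you yourself flag does need care: the clean way to pin down the direction at $p+u$ is to note that the immersions of $R_u$ and $R_v$ agree on their overlap, so the L--shape $R_u\cup R_v$ is immersed; launching from $p+u$ into the upper--left sheet determined by the corner of $R_u$, the diagonal of the missing rectangle $(se^a,s)\times(s,se^b)$ either meets a singularity at some $t<1$ (done) or, if that small rectangle contains no singularity in its interior, is homotopic rel endpoints to the path ``left along the top of $R_u$, then up along the right of $R_v$'', which terminates at $p+v$. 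Either way you get a saddle connection with $\Norm^2\le \Norm^2(v-u)$. Second, a small arithmetic slip: $C_3=\tfrac12$ gives $C_3^2e^{C_3}\approx 0.41$, not $<\tfrac14$; with your choice $\epsilon_3\le\tfrac{1}{3\sqrt5}$ the needed inequality is $C_3^2e^{C_3}<\tfrac12$, which $C_3=\tfrac12$ does satisfy.
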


\begin{proof}
Choose an $\epsilon_3$ so that we can apply Lemma~\ref{lem:parallelogram}. In a given fixed quadrant $i$
holonomies can be identified with saddle connections. Lemma~\ref{lem:parallelogram} implies that
best approximations are far apart.
\end{proof}

\begin{proof}[Proof of Theorem~\ref{thm:lagrange_surface}]
Let $d \geq 1$ and $\epsilon > 0$ be such that
\[
\epsilon < \min\left(\frac{\epsilon_1}{d}, \frac{\epsilon_2}{C_1 d},\ \frac{\epsilon_3}{C_1 d},\ \frac{C_3}{C_1 C_2 d^2}\right).
\]
Let $X \in \cC(d)$ be a translation surface so that
\[
\inf_{v \in V_{ba}(X)} \Norm^2(v) \geq \frac{1}{\sqrt{5}} - \epsilon.
\]
We will show that $X$ is actually a golden surface. Because $\epsilon <
\epsilon_1 / d$ we can apply Lemma~\ref{lem:low_to_up_bound}. Denoting
$\epsilon' = C_1 d \epsilon$ we have that
\begin{equation} \label{eq:all_sc_close}
\forall v \in V_{ba}(X), \quad \frac{1}{\sqrt{5}} - \epsilon' \leq \Norm^2(v) \leq \frac{1}{\sqrt{5}} + \epsilon'.
\end{equation}
Morever $\epsilon'$ satisfies
\[
\epsilon' < \min\left(\epsilon_2,\ \epsilon_3,\ \frac{C_3}{C_2 d}\right).
\]

We now show that any best approximation is part of a triangulation close to a golden
triangulation. In a moment we will use a fixed point argument to show
that $X$ is itself a golden surface.

A quadrilateral $q$ in $X$ is called \emph{admissible} if there exists an
immersed rectangle $R$ with horizontal and vertical sides such that there is exactly
one vertex of $q$ on each side of $R$. It is easy to see that an admissible
quadrilateral can be decomposed into two admissible triangles in two ways by
adding either of the two diagonals of $q$. The two diagonals of an admissible
quadrilateral $q$ will be called left-right and top-bottom diagonals. Given an
admissible quadrilateral, we can identify each side by its position: bottom
right, bottom left, top right, top left. The slopes of the bottom right and top
left sides are positive and following~\cite{DelecroixUlcigrai} we will say that
they are \emph{right slanted}. Similarly the bottom left and top right sides
are \emph{left slanted}.

\begin{figure}[!ht]
\begin{minipage}{0.3\textwidth}
\centerline{\includegraphics{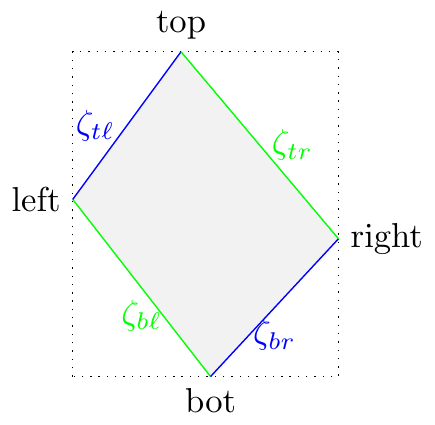}}
\subcaption{An admissible quadrilateral $q$. The sides $\zeta_{b\ell}$ and $\zeta_{tr}$ are left slanted
while  $\zeta_{br}$ and $\zeta_{t\ell}$ are right slanted.}
\label{fig:pil_pir}
\end{minipage}
\hspace{0.1\textwidth}
\begin{minipage}{0.6\textwidth}
\centerline{\includegraphics{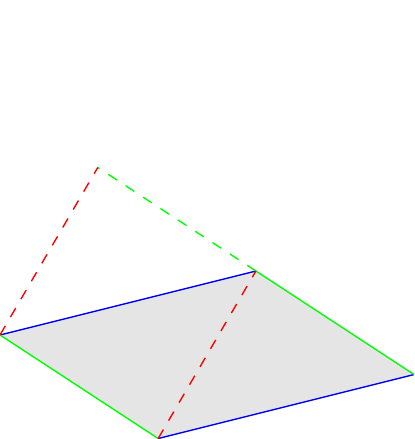} \hspace{1cm} \includegraphics{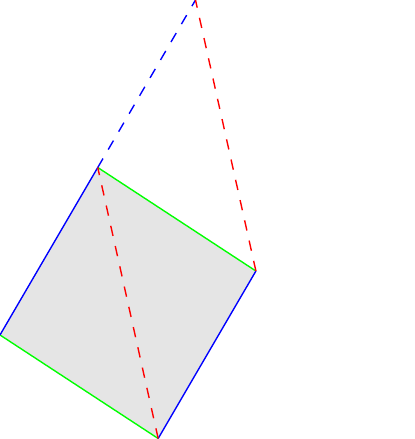}}
\subcaption{A left move followed by a right move on a golden parallelogram. The initial quadrilateral has blue right slanted and
green left slanted sides. The top-bottom diagonals are in red. The sides of the new quadrilateral are dashed.} 
\end{minipage}
\caption{Quadrangulations and diagonal changes.}
\label{fig:quad_and_moves}
\end{figure}

Let $Q$ be a quadrangulation of $X$. We denote by $E_\ell(Q)$ and $E_r(Q)$ the left
slanted and right slanted sides of $Q$. We measure how $Q$ is close to a quadrangulation
into golden parallelograms with the following function
\[
g(Q) = \max \left(
  \max_{\zeta \in E_r,\zeta' \in E_r} \dist(\zeta, \zeta'),\,
  \max_{\zeta \in E_\ell,\zeta' \in E_\ell} \dist(\zeta, \zeta'),\,
  \max_{\zeta \in E_r,\zeta' \in E_\ell} \dist(\tau(\zeta), \zeta'),\,
\right).
\]
It is clear that $g(Q) = 0$ if and only if $Q$ is a golden quadrangulation.

Let $\epsilon'' = d C_2 \epsilon'$.

\textbf{claim 1:} Let $\zeta_0$ be a right slanted best approximation in $X$. Then there exists a
unique quadrangulation $Q_0$ of $X$ into admissible quadrilaterals that admits $\zeta_0$ as one
of its sides and so that $g(Q_0) < \epsilon''$.

Since $\epsilon' < \epsilon_2$, it follows from Lemma~\ref{lem:parallelogram}
that $\zeta_0$ is the bottom right side of a unique quadrilateral so that its bottom left side $\zeta_{b\ell}$, its top
left side $\zeta_{t\ell}$ and its top right side $\zeta_{tr}$ satisfy that all of $\dist(\zeta_0, \zeta_{t\ell})$,
$\dist(\zeta_{b\ell}, \zeta_{tr})$ and $\dist(\tau(\zeta_0), \zeta_{b\ell})$ are smaller than $C_2 \epsilon'$.
We denote this quadrilateral by $q(\zeta_0)$.

Similarly, given a left slanted best approximation $\zeta$ one can also build a
unique quadrilateral so that its bottom left side $\zeta_{b\ell}$ is
$C_2\epsilon'$-close to $\tau^{-1}(\zeta)$.

Using these two rules we can build step by step a quadrangulation of $X$. More precisely, starting from $q_0$ we consider
its top sides and construct new quadrilaterals from these two. Then we repeat the operations with the newly created sides.

By construction, either the newly created top side will coincide with an
already constructed bottom side of another quadrilateral, or
we will have some non-trivial intersection between two constructed
parallelograms.  Let us show that this second case cannot happen. Let us
consider a chain $q_0, q_1, \ldots, q_k$ of adjacent quadrilaterals such
that the bottom sides of $q_0$ and $q_k$ belong to the same bundle of $X$ and
the bundles of $q_i$ for $i=0,\ldots,k-1$ are disjoint. Necessarily $k \leq d$,
and hence the bottom sides of $q_k$ are $k C_2 \epsilon'$-close to those of
$q_0$. As $k C_2 \epsilon' \leq \epsilon'' \leq C_3$,
Lemma~\ref{lem:ba_far_appart} implies that $q_0 = q_k$. This finishes the proof of the claim.

We will now use claim~1 to show that the surface $X$ is itself a golden
surface. In order to do so, we analyze how the different quadrangulations of
claim~1 are related to each other. The relation between the various
quadrangulations correspond to a particular case of the so called
Ferenczi-Zamboni induction~\cite{Ferenczi-induction} (see also~\cite{DelecroixUlcigrai}
for a particular case related to quadrangulations).

Given two admissible quadrangulations $Q$ and $Q'$ of $X$ we say that $Q'$ is obtained from $Q$ by a
\emph{left move} if the left slanted sides of $Q$ and $Q'$ are equal and the right slanted
sides of $Q'$ are the top-bottom diagonals of quadrilaterals of $Q$. We define right moves similarly. See also Figure~\ref{fig:quad_and_moves}.

By the uniqueness in claim~1, there exists a biinfinite sequence of
quadrangulations of $X$ into admissible quadrilaterals \ldots, $Q_{-1}, Q_0,
Q_1, Q_2, \ldots$ each of them satisfying the claim, and such that $Q_{n+1}$ is
obtained from $Q_n$ by a left move followed by a right move.

Recall that the bundles in $X$ are numbered. Hence each quadrilateral also
inherits a number given by the bundle it belongs to. We say that two
quadrangulations $Q$ and $Q'$ are \emph{combinatorially equivalent} if
for each $i \in \{1,\ldots,d\}$, the labels of the quadrilaterals
on the top left and top right of the quadrilateral labeled $i$ are the same
for $Q$ and $Q'$.
It is easy to see that there are finitely many possible combinatorial types
of quadrangulations. Moreover, if $Q'$ is obtained by a left or right move
from $Q$ then the combinatorial type of $Q'$ is only determined by the
combinatorial type of $Q$. We refer the reader to~\cite{DelecroixUlcigrai}
or~\cite{Ferenczi-induction} for these two elementary facts. Combining these
two facts, we see that the sequence of combinatorial types of the quadrangulations $(Q_n)_{n
\in \Z}$ is periodic for some period $p$. Since a left or right move operates as a linear
transformation on the holonomies of the sides, there exists a $2d \times 2d$ matrix $A$
with non-negative integer coefficients so that the holonomies of the sides of $Q_p$ are the images
of the side of $Q_0$ by $A$. Let $v_0$ be the holonomy of $\zeta_0$. Let $Q'_0$ be
the quadrangulation with the same combinatorics of $Q_0$ but such that all right
slanted sides have holonomy $v_0$ and all left slanted sides have holonomy $\tau(v_0)$.
The quadrangulation $Q'_0$ is a golden quadrangulation of another translation surface.
By construction, the real and imaginary parts of holonomies of $Q'_0$ are eigenvectors
of $A$ (namely real parts are multiplied by $\phi^{-2p}$ and imaginary parts by $\phi^{2p}$).
Applying the Perron-Frobenius theorem to the real and imaginary parts of the holonomies
we obtain the uniqueness of the fixed point (up to scalar multiples). Hence, $Q'_0 = Q_0$ and $X$
is a golden surface.
\end{proof}



\bibliographystyle{amsplain}


\begin{dajauthors}
\begin{authorinfo}[mb]
  Micahel Boshernitzan\\
  Rice University\\
  Houston, Texas\\
  \url{http://math.rice.edu/~michael/}
\end{authorinfo}
\begin{authorinfo}[vd]
  Vincent Delecroix\\
  CNRS, Universit\'e de Bordeaux\\
  Bordeaux, France\\
  \url{http://www.labri.fr/perso/vdelecro/}
\end{authorinfo}
\end{dajauthors}

\end{document}